\newcommand{\lp}{\mathcal{LP}}
\newcommand{\lpi}{\mathcal{LP}I}
\newcommand{\g}{\gamma}
\newtheorem{lemma}{Lemma}
\newtheorem{theorem}{Theorem}
\newcommand{\real}{\operatorname{Re}}
\keywords{Bessel functions; $q$-Bessel functions; univalent, starlike functions; convex functions; radius of starlikeness; radius of convexity; zeros of $q$-Bessel functions; Laguerre-P\'olya class of entire functions; Laguerre inequality; interlacing property of zeros.}
\subjclass[2010]{30C45, 30C15}
\begin{document}
\title[Radii of starlikeness and convexity of some $q$-Bessel functions]{Radii of starlikeness and convexity of some $q$-Bessel functions}

\author[\'A. Baricz]{\'Arp\'ad Baricz}
\address{Department of Economics, Babe\c{s}-Bolyai University, Cluj-Napoca 400591, Romania}
\address{Institute of Applied Mathematics, John von Neumann Faculty of Informatics, \'Obuda University, 1034 Budapest, Hungary}
\email{bariczocsi@yahoo.com}

\author[D.K. Dimitrov]{Dimitar K. Dimitrov}
\address{Departamento de Matem\'atica Aplicada, IBILCE, Universidade Estadual Paulista UNESP, S\~{a}o Jos\'e do Rio Preto 15054, Brazil}
\email{dimitrov@ibilce.unesp.br}

\author[I. Mez\H o]{Istv\'an Mez\H o}
\address{Department of Mathematics, Nanjing University of Information Science and Technology, 5 Panxin Rd, Pukou, Nanjing, Jiangsu, P.R. China}
\email{istvanmezo81@gmail.com}

\thanks{$\bigstar$The research of \'A. Baricz is supported by the Romanian National Authority for Scientific Research, CNCS-UEFISCDI, under Grant
PN-II-RU-TE-2012-3-0190. The work of \'A. Baricz was completed during his visit in August 2014 to Department of Mathematics of Nanjing University of Information Science and Technology, to which this author is grateful for hospitality. The research of I. Mez\H o is supported by the Scientific Research Foundation of Nanjing University of Information Science and Technology. The research of D. K. Dimitrov is supported by the Brazilian foundations CNPq under Grant 307183/2013--0 and FAPESP under Grants 2009/13832--9.}

\maketitle

\begin{center}
Dedicated to Professor Mourad E. H. Ismail on the occasion of his 70th birthday
\end{center}

\begin{abstract}
Geometric properties of the Jackson and Hahn-Exton $q$-Bessel functions are studied.
For each of them, three different normalizations are applied in such a way that
the resulting functions are analytic in the unit disk of the complex plane.
For each of the six functions we determine the radii of starlikeness and convexity precisely
by using their Hadamard factorization. These are $q$-generalizations of some known results for
Bessel functions of the first kind. The characterization of entire functions from the
Laguerre-P\'olya class via hyperbolic polynomials play an important role in this paper.
Moreover, the interlacing property of the zeros of Jackson and Hahn-Exton $q$-Bessel
functions and their derivatives is also useful in the proof of the main results.
We also deduce a sufficient and necessary condition for the close-to-convexity of a normalized
Jackson $q$-Bessel function and its derivatives. Some open problems are
proposed at the end of the paper.
\end{abstract}



\section{Introduction and statements of the main results}

Let $\mathbb{D}_{r}$ be the open disk $\left\{ {z\in \mathbb{C}:\left\vert
z\right\vert <r}\right\}$ with radius $r>0$ and let $\mathbb{D}=\mathbb{D}_1.$ By $\mathcal{A}$ we mean the class of normalized analytic
functions $f:\mathbb{D}_r\to\mathbb{C}$ which satisfy the usual
normalization conditions $f(0)=f'(0)-1=0.$ Denote by $\mathcal{S}$
the class of functions belonging to $\mathcal{A}$ which are univalent in $\mathbb{D}%
_r$ and let $\mathcal{S}^{\ast }(\alpha )$ be the subclass of $\mathcal{S}$
consisting of functions which are starlike of order $\alpha $ in $\mathbb{D}%
_r,$ where $\alpha\in[0,1).$ The analytic characterization of this class of
functions is
\begin{equation*}
\mathcal{S}^{\ast }(\alpha )=\left\{ f\in \mathcal{S}\ :\  \real \left(\frac{zf'(z)}{f(z)}\right)>\alpha\ \ \mathrm{for\ all}\ \ z\in \mathbb{%
D}_r \right\},
\end{equation*}
and we adopt the convention $\mathcal{S}^{\ast}=\mathcal{S}^{\ast }(0)$. The real number
\begin{equation*}
r_{\alpha }^{\ast}(f)=\sup \left\{ r>0\ :\ \real \left(\frac{zf'(z)}{f(z)}\right)>\alpha\ \ \mathrm{ for\ all}\ \ z\in \mathbb{D}_{r}\right\},
\end{equation*}
is called the radius of starlikeness of order $\alpha $ of the function $f.$
Note that $r^{\ast }(f)=r_0^{\ast}(f)$ is the largest radius such that the
image region $f(\mathbb{D}_{r^{\ast }(f)})$ is a starlike domain with
respect to the origin.

For $\alpha\in[0,1)$ the class of convex functions of order $\alpha$ is defined by
 $$\mathcal{K}(\alpha) = \left\{f\in \mathcal{S}\ :\ \real \left(1+\frac{zf''(z)}{f'(z)}\right) > \alpha\ \ \mathrm{for\ all}\ \ z\in \mathbb{D}_r \right\},$$
 and for $\alpha=0$ it reduces to the class $\mathcal{K}$ of convex functions. We note that the convex functions do not need to be normalized, that is, the definition of $\mathcal{K}(\alpha)$ is also valid for non-normalized analytic function $f:\mathbb{D}\to\mathbb{C}$ with the property $f'(0)\neq0$, The radius of convexity of order $\alpha$ of an analytic locally
univalent function $f:\mathbb{C}\to\mathbb{C}$ is defined by
$$r^c_\alpha(f)=\sup\left\{r>0\ : \ \real \left(1+\frac{zf''(z)}{f'(z)}\right) >\alpha\ \ \mathrm{ for\ all}\ \ z\in\mathbb{D}_r\right\}.$$
We note that $r^c(f)=r^c_0(f)$ is in fact the largest radius for which the image domain $f(\mathbb{D}_{r^c(f)})$ is a convex domain in $\mathbb{C}.$ For more
information about starlike and convex functions we refer to Duren's the book \cite{duren} and to the references therein.

Consider the Jackson and Hahn-Exton $q$-Bessel functions. They are explicitly defined by
$$J_{\nu}^{(2)}(z;q)=\frac{\left(q^{\nu+1};q\right)_{\infty}}{(q;q)_{\infty}}\sum_{n\geq0}\frac{(-1)^n\left(\frac{z}{2}\right)^{2n+\nu}}{(q;q)_n\left(q^{\nu+1};q\right)_n}q^{n(n+\nu)}$$
and
$$J_{\nu}^{(3)}(z;q)=\frac{\left(q^{\nu+1};q\right)_{\infty}}{(q;q)_{\infty}}\sum_{n\geq0}\frac{(-1)^n{z}^{2n+\nu}}{(q;q)_n\left(q^{\nu+1};q\right)_n}q^{\frac{1}{2}n(n+1)},$$
where $z\in\mathbb{C},$ $\nu>-1,$ $q\in(0,1)$ and
$$(a;q)_0=1,\ \ \ (a;q)_n=\prod_{k=1}^n\left(1-aq^{k-1}\right),\ \ \ (a;q)_{\infty}=\prod_{k\geq1}\left(1-aq^{k-1}\right).$$
A common feature of these analytic functions is that they are $q$-extensions of the classical Bessel function of the first kind $J_{\nu}.$ Namely, for fixed $z$ we have $J_{\nu}^{(2)}((1-z)q;q)\to J_{\nu}(z)$ and $J_{\nu}^{(3)}((1-z)q;q)\to J_{\nu}(2z)$ as $q\nearrow 1.$ Watson's treatise \cite{Wat} contains comprehensive information about the Bessel function of the first kind and properties of the above $q$-extensions of Bessel functions cane be found in \cite{ismail,muldoon,koelink,koornwinder} and in the references therein.  Geometric properties of Bessel functions of the first kind, such as univalence, starlikeness, convexity and close-to-convexity, are established in \cite{publ,lecture,starlike,convex,close}. Motivated by those results, in this paper we study the geometric properties of the Jackson and Hahn-Exton $q$-Bessel functions. For each of them, three different normalizations are applied in such a way that the resulting functions are analytic in the unit disk of the complex plane. Using the Weierstrassian decomposition of $J_{\nu}^{(2)}$ and $J_{\nu}^{(3)}$ and combining the methods from \cite{lommel,starlike,convex} we determine precisely the radii of starlikeness and convexity for each of the six functions. These results are the $q$-generalizations of the corresponding results for Bessel functions of the first kind obtained in \cite{starlike,convex}. A characterization of entire functions from the Laguerre-P\'olya class involving hyperbolic polynomials and an interlacing property of the zeros of Jackson and Hahn-Exton $q$-Bessel functions and their derivatives play an important role in the proofs. We establish a necessary and sufficient condition for the close-to-convexity of a normalized Jackson $q$-Bessel function and its derivatives. The results obtained in the present paper show that there is no essential difference between Jackson and Hahn-Exton $q$-Bessel functions when one teats the problem about the radii of starlikeness and convexity. Therefore one may expect that other geometric properties of these two $q$-extensions of Bessel's function are also similar.

Since neither $J_{\nu}^{(2)}(\cdot;q)$, nor $J_{\nu}^{(3)}(\cdot;q)$ belongs to $\mathcal{A}$, first we perform some natural normalizations. For $\nu>-1$ we define three functions originating from $J_{\nu}^{(2)}(\cdot;q)$:
$$f_{\nu}^{(2)}(z;q)=\left(2^{\nu}c_{\nu}(q)J_{\nu}^{(2)}(z;q)\right)^{\frac{1}{\nu}},\ \nu\neq0,$$
$$g_{\nu}^{(2)}(z;q)=2^{\nu}c_{\nu}(q)z^{1-\nu}J_{\nu}^{(2)}(z;q),$$
$$h_{\nu}^{(2)}(z;q)=2^{\nu}c_{\nu}(q)z^{1-\frac{\nu}{2}}J_{\nu}^{(2)}(\sqrt{z};q),$$
where $c_{\nu}(q)=(q;q)_{\infty}/\left(q^{\nu+1};q\right)_{\infty}.$ Similarly, we associate with $J_{\nu}^{(3)}(\cdot;q)$ the functions
$$f_{\nu}^{(3)}(z;q)=\left(c_{\nu}(q)J_{\nu}^{(3)}(z;q)\right)^{\frac{1}{\nu}},\ \nu\neq0,$$
$$g_{\nu}^{(3)}(z;q)=c_{\nu}(q)z^{1-\nu}J_{\nu}^{(3)}(z;q),$$
$$h_{\nu}^{(3)}(z;q)=c_{\nu}(q)z^{1-\frac{\nu}{2}}J_{\nu}^{(3)}(\sqrt{z};q).$$
Clearly the functions $f_{\nu}^{(s)}(\cdot;q)$, $g_{\nu}^{(s)}(\cdot;q)$, $h_{\nu}^{(s)}(\cdot;q)$, $s\in\{2,3\},$ belong to the class $\mathcal{A}$.

The first principal result we establish concerns the radii of starlikeness and reads as follows.

\begin{theorem}
\label{theo1} Let $\nu>-1$ and $s\in\{2,3\}.$ The following statements hold:

\begin{enumerate}
\item[\textbf{a)}] If $\alpha\in[0,1)$ and $\nu>0,$ then $r_{\alpha }^{\ast }\left(f_{\nu}^{(s)}\right)=x_{\nu
,\alpha,1}$, where $x_{\nu,\alpha,1}$ is the smallest positive root of the
equation $$r\cdot dJ_{\nu}^{(s)}(r;q)/dr-\alpha\nu J_{\nu}^{(s)}(r;q)=0.$$ Moreover, if $\alpha\in[0,1)$ and $\nu\in(-1,0),$ then $r_{\alpha
}^{\ast }\left(f_{\nu}^{(s)}\right)=x_{\nu ,\alpha }$, where $x_{\nu ,\alpha }$ is
the unique positive root of the equation $$ir\cdot dJ_{\nu}^{(s)}(ir;q)/dr-\alpha\nu J_{\nu}^{(s)}(ir;q)=0.$$

\item[\textbf{b)}] If $\alpha\in[0,1),$ then $r_{\alpha }^{\ast }\left(g_{\nu}^{(s)}\right)=y_{\nu,\alpha,1}$, where $y_{\nu,\alpha,1}$ is the smallest positive root of the equation $$r\cdot dJ_{\nu}^{(s)}(r;q)/dr-(\alpha+\nu-1)J_{\nu}^{(s)}(r;q)=0.$$

\item[\textbf{c)}] If $\alpha\in[0,1),$ then $r_{\alpha }^{\ast }\left(h_{\nu}^{(s)}\right)=z_{\nu,\alpha,1}$, where $z_{\nu,\alpha,1}$ is the smallest positive root of the equation $$r\cdot dJ_{\nu}^{(s)}(r;q)/dr-(2\alpha+\nu-2) J_{\nu}^{(s)}(r;q)=0.$$
\end{enumerate}
\end{theorem}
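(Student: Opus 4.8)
The plan is to reduce every case to the behaviour of the logarithmic derivative of the relevant function along a single ray, exploiting the Weierstrassian factorization of $z\mapsto z^{-\nu}J_\nu^{(s)}(z;q)$. First I would record that, because this function is even and (by its membership in the Laguerre--P\'olya class established earlier) has only real zeros $\pm j_{\nu,n}^{(s)}(q)$, it admits the product representation
$$J_\nu^{(s)}(z;q)=\frac{z^\nu}{2^\nu c_\nu(q)}\prod_{n\geq1}\left(1-\frac{z^2}{(j_{\nu,n}^{(s)})^2}\right)$$
(with the factor $2^\nu$ absent when $s=3$). Taking logarithmic derivatives then yields
$$\frac{z(f_\nu^{(s)})'(z)}{f_\nu^{(s)}(z)}=1-\frac{2}{\nu}\sum_{n\geq1}\frac{z^2}{(j_{\nu,n}^{(s)})^2-z^2},$$
$$\frac{z(g_\nu^{(s)})'(z)}{g_\nu^{(s)}(z)}=1-2\sum_{n\geq1}\frac{z^2}{(j_{\nu,n}^{(s)})^2-z^2},$$
and, after the substitution $w=\sqrt z$, an analogous identity for $h_\nu^{(s)}$ involving $\sum_{n\geq1}z/((j_{\nu,n}^{(s)})^2-z)$; the exponents $1/\nu$, $1-\nu$, $1-\nu/2$ are responsible for the shifts in the constant terms.

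The key estimate I would invoke is the elementary bound: for real $a>0$ and $|z|=r<a$,
$$\mathrm{Re}\left(\frac{z^2}{a^2-z^2}\right)\leq\frac{r^2}{a^2-r^2},\qquad \mathrm{Re}\left(\frac{z}{a-z}\right)\leq\frac{r}{a-r},$$
with equality at $z=r$. Since the coefficient of the sum is negative in the expressions for $g$, $h$ and for $f$ when $\nu>0$, this shows that on the circle $|z|=r$ the quantity $\mathrm{Re}(z(\cdot)'/(\cdot))$ attains its minimum at the positive real point $z=r$, as long as $r$ stays below the first zero $j_{\nu,1}^{(s)}$. Hence $f$, $g$ or $h$ is starlike of order $\alpha$ in $\mathbb{D}_r$ exactly when the real-axis value remains above $\alpha$, and the radius of starlikeness is the first $r$ at which equality occurs. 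Rewriting that real-axis equation through $r(J_\nu^{(s)})'(r)/J_\nu^{(s)}(r)=\nu-2\sum_n r^2/((j_{\nu,n}^{(s)})^2-r^2)$ converts each condition into the stated equation, the constants $\alpha\nu$, $\alpha+\nu-1$, $2\alpha+\nu-2$ emerging from the respective normalizing exponents.

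To locate these roots and to confirm they truly give the radius I would use monotonicity together with the interlacing of the zeros of $J_\nu^{(s)}$ and its derivative. On $(0,j_{\nu,1}^{(s)})$ each summand $r^2/((j_{\nu,n}^{(s)})^2-r^2)$ is strictly increasing, so the real-axis logarithmic derivative decreases strictly from $1$ to $-\infty$; it therefore meets the level $\alpha\in[0,1)$ exactly once, at the smallest positive root of the corresponding equation, and this root lies below $j_{\nu,1}^{(s)}$. This settles parts (b), (c) and the $\nu>0$ half of (a).

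The genuinely different case, and the one I expect to be the main obstacle, is $\nu\in(-1,0)$ in part (a): there the prefactor $-2/\nu$ becomes positive, so the minimum of $\mathrm{Re}(zf'/f)$ on $|z|=r$ migrates from the real to the imaginary axis, the sharp bound now reading $\mathrm{Re}(z^2/(a^2-z^2))\geq-r^2/(a^2+r^2)$ with equality at $z=ir$. This forces the defining equation to be evaluated at $z=ir$, producing $ir\,(J_\nu^{(s)})'(ir)-\alpha\nu J_\nu^{(s)}(ir)=0$; and since $r\mapsto1+(2/\nu)\sum_n r^2/((j_{\nu,n}^{(s)})^2+r^2)$ is strictly decreasing on all of $(0,\infty)$ and has no poles, this equation possesses a unique positive root, which explains why the statement asserts uniqueness rather than minimality here. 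Checking rigorously that the imaginary-axis point is the actual minimizer, and that $r\mapsto J_\nu^{(s)}(ir;q)$ stays zero-free on the positive axis so that the logarithmic derivative remains well defined, is the delicate part of the argument.
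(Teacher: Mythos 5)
Your proposal is correct and follows the same overall architecture as the paper's proof: the Hadamard factorization of Lemma \ref{lem1}, the partial-fraction form of $zf'/f$, $zg'/g$, $zh'/h$, the sharp bounds $\real\left(z/(\beta-z)\right)\le |z|/(\beta-|z|)$ and $\real\left(z/(\beta-z)\right)\ge -|z|/(\beta+|z|)$ that push the minimum of the real part onto the positive real axis (onto the imaginary axis when $\nu\in(-1,0)$), and a monotonicity argument to identify the threshold radius. The one step where you genuinely deviate is the localization of the root. The paper invokes Lemma \ref{ThZ} --- proved via Jensen polynomials and Obrechkoff's theorem --- to conclude that $z\mapsto z\cdot dJ_{\nu}^{(s)}(z;q)/dz-cJ_{\nu}^{(s)}(z;q)$ has only real zeros whose first positive one precedes $j_{\nu,1}(q)$ (resp.\ $l_{\nu,1}(q)$); this is what guarantees that the crossing of the level $\alpha$ happens where the real-part inequalities are applicable. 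You obtain the same localization directly: on $(0,j_{\nu,1}(q))$ each summand $r^2/(j_{\nu,n}^2(q)-r^2)$ increases, so the real-axis logarithmic derivative decreases strictly from $1$ to $-\infty$ and meets $\alpha\in[0,1)$ exactly once there, which gives existence, uniqueness and minimality in one stroke and makes Lemma \ref{ThZ} dispensable for Theorem \ref{theo1} (it is still needed for Lemma \ref{zerodini} and Theorem \ref{theo2}). For $\nu\in(-1,0)$ the paper uses Lemma \ref{lempower} where you use termwise monotonicity of $r^2/(j_{\nu,n}^2(q)+r^2)$; these are equivalent. The point you flag as delicate but leave open --- that the unique positive root of the imaginary-axis equation lies below $j_{\nu,1}(q)$, which is needed because the lower bound \eqref{2.10} is only available for $|z|<j_{\nu,1}(q)$ --- follows at once from your own expansion: already the $n=1$ term shows the value at $r=j_{\nu,1}(q)$ is at most $1+1/\nu<0\le\alpha$, so the crossing occurs earlier; and zero-freeness of $r\mapsto J_{\nu}^{(s)}(ir;q)$ on $(0,\infty)$ is immediate from the reality of all zeros guaranteed by Lemma \ref{lem1}.
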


Our second result concerns the radii of convexity.

\begin{theorem}
\label{theo2}
Let $\nu>-1$ and $s\in\{2,3\}.$ The following statements hold:
\begin{enumerate}
\item[{\bf a)}] If $\nu>0$ and $\alpha\in[0,1),$  then the radius of
convexity of order $\alpha$ of the function $f_\nu^{(s)}(\cdot;q)$ is the smallest positive
root of the equation
$$1+\frac{r\cdot d^2J_\nu^{(s)}(r;q)/dr^2}{dJ_\nu^{(s)}(r;q)/dr}+\left(\frac{1}{\nu}-1\right)\frac{r\cdot dJ_\nu^{(s)}(r;q)/dr}{J_\nu(r;q)}=\alpha.$$
Moreover, we have $r^{c}_{\alpha}(f_{\nu}^{(2)})<j_{\nu,1}'(q)<j_{\nu,1}(q)$ and $r^{c}_{\alpha}(f_{\nu}^{(3)})<l_{\nu,1}'(q)<l_{\nu,1}(q),$ where $j_{\nu,1}(q),$ $l_{\nu,1}(q),$ $j_{\nu,1}'(q)$ and $l_{\nu,1}'(q)$ are the first positive zeros of the functions ${J}_{\nu}^{(2)}(\cdot;q),$ ${J}_{\nu}^{(3)}(\cdot;q),$ $z\mapsto d{J}_{\nu}^{(2)}(z;q)/dz$ and $z\mapsto d{J}_{\nu}^{(3)}(z;q)/dz.$

\item[{\bf b)}] If $\nu>-1$ and $\alpha\in[0,1),$ then the radius of
convexity of order $\alpha$ of the function $g_{\nu}^{(s)}(\cdot;q)$ is the smallest positive root of
the equation
$$1-\nu+r\frac{(2-\nu)\cdot dJ_{\nu}^{(s)}(r;q)/dr+r\cdot d^2J_{\nu}^{(s)}(r;q)/dr^2}{(1-\nu)J_{\nu}^{(s)}(r;q)+r\cdot dJ_{\nu}^{(s)}(r;q)/dr}=\alpha.$$
Moreover, we have $r^{c}_{\alpha}(g_{\nu}^{(2)})<\alpha_{\nu,1}(q)<j_{\nu,1}(q)$ and $r^{c}_{\alpha}(g_{\nu}^{(3)})<\gamma_{\nu,1}(q)<l_{\nu,1}(q),$ where
$\alpha_{\nu,1}(q)$ and $\gamma_{\nu,1}(q)$ are the first positive zeros of the functions $z\mapsto z\cdot d{J}_{\nu}^{(2)}(z;q)/dz+(1-\nu){J}_{\nu}^{(2)}(z;q)$ and $z\mapsto z\cdot d{J}_{\nu}^{(3)}(z;q)/dz+(1-\nu){J}_{\nu}^{(3)}(z;q).$

\item[{\bf c)}] If $\nu>-1$ and $\alpha\in[0,1),$ then the radius of
convexity of order $\alpha$ of the function $h_{\nu}^{(s)}(\cdot;q)$ is the smallest positive root of
the equation
$$1-\frac{\nu}{2}+\frac{\sqrt{r}}{2}\frac{(3-\nu)\cdot dJ_{\nu}^{(s)}(\sqrt{r};q)/dr+\sqrt{r}\cdot d^2J_{\nu}^{(s)}(\sqrt{r};q)/dr^2}{(2-\nu)J_{\nu}^{(s)}(\sqrt{r};q)+\sqrt{r}\cdot dJ_{\nu}^{(s)}(\sqrt{r};q)/dr}=\alpha.$$
Moreover, we have $r^{c}_{\alpha}(h_{\nu}^{(2)})<\beta_{\nu,1}(q)<j_{\nu,1}(q)$ and $r^{c}_{\alpha}(h_{\nu}^{(3)})<\delta_{\nu,1}(q)<l_{\nu,1}(q),$ where
$\beta_{\nu,1}(q)$ and $\delta_{\nu,1}(q)$ are the first positive zeros of the functions $z\mapsto z\cdot d{J}_{\nu}^{(2)}(z;q)/dz+(2-\nu){J}_{\nu}^{(2)}(z;q),$  and $z\mapsto z\cdot d{J}_{\nu}^{(3)}(z;q)/dz+(2-\nu){J}_{\nu}^{(3)}(z;q).$
\end{enumerate}
\end{theorem}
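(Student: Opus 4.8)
The plan is to reduce each of the six convexity conditions to a real-variable equation by a minimum-principle argument, exactly as in the proof of Theorem \ref{theo1}, the only genuinely new difficulty being the sign bookkeeping for $f_\nu^{(s)}$. Writing $J=J_\nu^{(s)}(\cdot;q)$ and denoting by $\xi_n$ the $n$-th positive zero of $J$, the Weierstrassian decomposition of $J$ reduces the normalizations to the product forms
$$f_\nu^{(s)}(z;q)=z\prod_{n\ge1}\left(1-\frac{z^2}{\xi_n^2}\right)^{1/\nu},\quad g_\nu^{(s)}(z;q)=z\prod_{n\ge1}\left(1-\frac{z^2}{\xi_n^2}\right),\quad h_\nu^{(s)}(z;q)=z\prod_{n\ge1}\left(1-\frac{z}{\xi_n^2}\right),$$
where $\xi_n$ is $j_{\nu,n}(q)$ when $s=2$ and $l_{\nu,n}(q)$ when $s=3$. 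First I would compute $1+zf''/f'$ for each function by logarithmic differentiation; for $f_\nu^{(s)}$ this yields the clean identity
$$1+\frac{zf''(z)}{f'(z)}=1+\frac{zJ''(z)}{J'(z)}+\left(\frac1\nu-1\right)\frac{zJ'(z)}{J(z)},$$
whose real form, set equal to $\alpha$, is precisely the equation in part a).

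Expanding the two logarithmic derivatives over the zeros $\xi_n$ of $J$ and the zeros $\xi_n'$ of $J'$ gives
$$1+\frac{zf''(z)}{f'(z)}=1-\sum_{n\ge1}\frac{2z^2}{\xi_n'^2-z^2}-\left(\frac1\nu-1\right)\sum_{n\ge1}\frac{2z^2}{\xi_n^2-z^2}.$$
For $g_\nu^{(s)}$ and $h_\nu^{(s)}$ the same procedure produces a single sum: since the even function $z\mapsto z^{-\nu}J(z)$ belongs to the Laguerre--P\'olya class $\lp$ and $\lp$ is closed under differentiation, the derivatives $g'$ and $h'$ are entire functions with only real zeros, normalized by $g'(0)=h'(0)=1$, whence
$$1+\frac{zg''(z)}{g'(z)}=1-\sum_{n\ge1}\frac{2z^2}{\zeta_n^2-z^2},\qquad 1+\frac{zh''(z)}{h'(z)}=1-\sum_{n\ge1}\frac{z}{\eta_n-z},$$
where $\zeta_n$ and $\eta_n$ are the positive zeros of $g'$ and $h'$. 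A direct rewriting in terms of $J$ identifies $\zeta_1=\alpha_{\nu,1}(q)$ and $\eta_1=\beta_{\nu,1}(q)$ for $s=2$ (and $\gamma_{\nu,1}(q),\delta_{\nu,1}(q)$ for $s=3$) and shows that these displayed equations coincide with those of parts b) and c).

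Next I would run the minimum-principle reduction. On the real segment $(0,\rho)$, where $\rho$ is the first positive zero of the relevant derivative ($j_{\nu,1}'(q)$, $\alpha_{\nu,1}(q)$ or $\beta_{\nu,1}(q)$ for $s=2$), the left-hand side runs continuously from $1$ down to $-\infty$, so the equation $=\alpha$ has a smallest root $r^\star\in(0,\rho)$ and the left-hand side exceeds $\alpha$ on $(0,r^\star)$; the inequalities $\rho<j_{\nu,1}(q)$ (resp. $l_{\nu,1}(q)$) in the ``Moreover'' clauses are the interlacing of the zeros of $J'$ (resp. of $zJ'+(1-\nu)J$ and $zJ'+(2-\nu)J$) with those of $J$. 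The core estimate is then
$$\real\left(1+\frac{zf''(z)}{f'(z)}\right)\ge 1+\frac{rf''(r)}{f'(r)}\qquad(|z|=r<\rho),$$
which follows from the elementary bounds $\real\big(z^2/(a^2-z^2)\big)\le r^2/(a^2-r^2)$ and $\real\big(z/(a-z)\big)\le r/(a-r)$ valid for $|z|=r<a$. Because all coefficients are nonnegative, this holds termwise for $g_\nu^{(s)}$, for $h_\nu^{(s)}$, and for $f_\nu^{(s)}$ whenever $0<\nu\le1$.

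The hard part will be the case $\nu>1$ of part a): there $1/\nu-1<0$, the second sum enters with the wrong sign, and the termwise bound breaks down. To overcome this I would pair the two sums index by index and invoke the interlacing $\xi_n'<\xi_n$ of the zeros of $J'$ and $J$. Setting $D_a(z)=r^2/(a^2-r^2)-\real\big(z^2/(a^2-z^2)\big)\ge0$, an explicit computation with $z^2=r^2e^{i\phi}$ gives
$$D_a(z)=\frac{r^2a^2(1-\cos\phi)(a^2+r^2)}{(a^2-r^2)(a^4-2a^2r^2\cos\phi+r^4)},$$
which is decreasing in $a$; hence $D_{\xi_n'}(z)\ge D_{\xi_n}(z)\ge(1-1/\nu)D_{\xi_n}(z)$ for every $n$, and summation over $n$ restores the core estimate. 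Evaluating at the real point $z=r^\star$ shows the bound is sharp, so $r^\star$ is exactly the radius of convexity, and together with $r^\star<\rho<j_{\nu,1}(q)$ (resp. $l_{\nu,1}(q)$) this finishes the proof.
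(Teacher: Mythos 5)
Your proposal is correct and follows essentially the same route as the paper's proof: the same logarithmic-derivative identity for $1+zf_\nu''/f_\nu'$, the same Hadamard-product expansions over the zeros of $J$, $J'$, $g_\nu'$ and $h_\nu'$, the same minimum-principle reduction to the real segment ending at the first zero of the relevant derivative, and the same use of the interlacing $\xi_n'<\xi_n$ to rescue the case $\nu>1$. The only substantive difference is that you prove the key inequality for $\nu>1$ from scratch via the explicit formula for $D_a(z)$ and its monotonicity in $a$ (which checks out: it reduces to $r^4(1+2\cos\phi)\le a^2(a^2+2r^2)$, valid for $a>r$), whereas the paper simply imports this as Lemma 2.1 of the cited work on the radius of convexity of Bessel functions.
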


We note that these theorems are natural $q$-extension to Jackson and Hahn-Exton $q$-Bessel functions of the results obtained in \cite{starlike} and \cite{convex}.
While the ideas of the proofs are similar, here we need some specific $q$-extensions of results about the Bessel functions which are of independent interest, such as
Lemmas \ref{lem1}, \ref{zerodini} and \ref{interlace}.

Finally, we state a result, which is the $q$-extension of the first part of \cite[Theorem 1]{close} for the Jackson $q$-Bessel function.

\begin{theorem}\label{theo3}
If $\nu>-1,$ then the function $h_{\nu}(\cdot;q)=h_{\nu}^{(2)}(\cdot;q)$ is starlike and all of its derivatives are close-to-convex in $\mathbb{D}$ if and only if $\nu\geq\max\{\nu_0(q),\nu^{*}(q)\},$ where $\nu_0(q)$ is the unique root of the equation $\left.dh_{\nu}^{(2)}(z;q)/dz\right|_{z=1}=h_{\nu}'(1;q)=0,$ and $\nu^{*}(q)$ is the unique root of the equation $j_{\nu,1}(q)=1.$
\end{theorem}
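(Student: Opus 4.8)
The plan is to combine the Hadamard (Weierstrassian) factorization of $h_{\nu}(\cdot;q)$ with the reality and interlacing of the zeros of the Jackson $q$-Bessel function. As in the proofs of Theorems \ref{theo1} and \ref{theo2}, and using Lemma \ref{lem1}, I would first record that for $\nu>-1$ the zeros $j_{\nu,n}(q)$ of $J_{\nu}^{(2)}(\cdot;q)$ are all real, whence
\begin{equation*}
h_{\nu}(z;q)=z\prod_{n\geq1}\left(1-\frac{z}{j_{\nu,n}^{2}(q)}\right),
\end{equation*}
so that, up to the leading factor $z$, the function $h_{\nu}(\cdot;q)$ is a member of the Laguerre--P\'olya class $\lp$ with only positive zeros. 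Logarithmic differentiation then yields the identity
\begin{equation*}
\frac{zh_{\nu}'(z;q)}{h_{\nu}(z;q)}=1-\sum_{n\geq1}\frac{z}{j_{\nu,n}^{2}(q)-z},
\end{equation*}
which I would use throughout for the starlikeness part.

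For the starlikeness of $h_{\nu}(\cdot;q)$ I would observe that, since every $j_{\nu,n}^{2}(q)$ is positive, the minimum of $\real\big(zh_{\nu}'(z;q)/h_{\nu}(z;q)\big)$ on each circle $|z|=r<1$ is attained at $z=r$; this reduces the condition $\real(zh_{\nu}'/h_{\nu})>0$ on $\mathbb{D}$ to requiring that the smallest positive zero of $z\mapsto h_{\nu}'(z;q)$ be at least $1$. By the definition of $\nu_{0}(q)$ through $h_{\nu}'(1;q)=0$, together with the monotonicity in $\nu$ of that first zero (the $q$-analogue of the monotonicity of Dini-type zeros, cf.\ Lemma \ref{zerodini}), this is precisely the condition $\nu\geq\nu_{0}(q)$. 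I would also note that $\real(zh_{\nu}'/h_{\nu})>0$ on $\mathbb{D}$ forces $h_{\nu}(\cdot;q)$ to be zero-free on $\mathbb{D}\setminus\{0\}$, hence $j_{\nu,1}(q)\geq1$, i.e.\ $\nu\geq\nu^{*}(q)$; thus $\nu^{*}(q)$ enters as the threshold ensuring the \emph{univalence} of $h_{\nu}(\cdot;q)$, while $\nu_{0}(q)$ governs the sign condition on the logarithmic derivative, which explains why both thresholds are recorded and the binding one is their maximum.

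For the close-to-convexity of the derivatives I would use that $\lp$ is closed under differentiation together with the interlacing property of Lemma \ref{interlace}: the zeros of $h_{\nu}^{(m)}(\cdot;q)$ are real, positive and interlace with those of $h_{\nu}^{(m-1)}(\cdot;q)$, so the smallest positive zero of $h_{\nu}^{(m)}(\cdot;q)$ increases with $m$. Consequently, once the first zero of $h_{\nu}'(\cdot;q)$ lies outside $\mathbb{D}$ (that is, $\nu\geq\nu_{0}(q)$), every derivative $h_{\nu}^{(m)}(\cdot;q)$ is zero-free, hence locally univalent, in $\mathbb{D}$. To upgrade local univalence to close-to-convexity I would invoke the close-to-convexity criterion, adapted from \cite{close}, for functions whose derivative is a product $\prod_{n}(1-z/w_{n})$ with real zeros $w_{n}\geq1$: via Kaplan's theorem one controls the variation of $\arg\big(h_{\nu}^{(m)}(\cdot;q)\big)'$ on the circles $|z|=r<1$ and shows that it never decreases by more than $\pi$ on any arc, which applies uniformly to each $m$ by the interlacing just described.

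For the converse I would show that violating either threshold destroys the required property: if $\nu<\nu^{*}(q)$ then $j_{\nu,1}(q)<1$ and $h_{\nu}(\cdot;q)$ vanishes inside $\mathbb{D}$, so it is not univalent, let alone starlike; if $\nu<\nu_{0}(q)$ then $h_{\nu}'(\cdot;q)$ vanishes inside $\mathbb{D}$, so $h_{\nu}(\cdot;q)$ is not locally univalent there and its first derivative cannot be close-to-convex. The uniqueness of $\nu_{0}(q)$ and $\nu^{*}(q)$ would follow from the strict monotonicity in $\nu$ of $j_{\nu,1}(q)$ and of the first zero of $h_{\nu}'(\cdot;q)$. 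I expect the main obstacle to be the close-to-convexity step itself: establishing not merely the non-vanishing of the successive derivatives but the full Kaplan condition uniformly in $m$, which demands a careful estimate of $\sum_{n}\arg(1-z/w_{n})$ for the interlacing zero sequences of the derivatives, together with proving the monotonicity and interlacing properties for the Jackson $q$-Bessel function, where the classical Bessel arguments do not transfer verbatim.
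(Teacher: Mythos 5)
Your identification of the roles of the two thresholds is essentially right: $\nu\geq\nu^{*}(q)$ forces $j_{\nu,n}^{2}(q)>1$ for all $n$ (via the monotonicity of $\nu\mapsto j_{\nu,n}(q)$ from \cite[Theorem 3]{muldoon}), and the sign of $h_{\nu}'(1;q)/h_{\nu}(1;q)=1-\sum_{n\geq1}\bigl(j_{\nu,n}^{2}(q)-1\bigr)^{-1}$, monotone in $\nu$, pins down $\nu_{0}(q)$. But there is a genuine gap at the heart of your argument: the close-to-convexity of \emph{all} derivatives is never proved. You reduce it to non-vanishing of each $h_{\nu}^{(m)}(\cdot;q)$ in $\mathbb{D}$ plus a Kaplan-type estimate of the argument variation of $\bigl(h_{\nu}^{(m)}\bigr)'$, uniformly in $m$, and you yourself flag this as ``the main obstacle'' without carrying it out. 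Local univalence of the derivatives is far from close-to-convexity, and the uniform Kaplan estimate for the interlacing zero sequences of all derivatives of an infinite product is precisely the hard analytic content you would need to supply; as written, the forward implication of the theorem is therefore not established. The converse is also incomplete for the same reason: you show that $\nu<\nu_{0}(q)$ or $\nu<\nu^{*}(q)$ destroys starlikeness or local univalence of $h_{\nu}'$, but the theorem's ``only if'' requires that the failure of the summability condition destroys the conjunction of starlikeness and close-to-convexity of all derivatives, which again needs a criterion in both directions.

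The paper sidesteps all of this with Lemma \ref{lemshah}, the Shah--Trimble theorem \cite[Theorem 2]{st}: for a transcendental entire $f(z)=z\prod_{n\geq1}(1-z/z_{n})$ with the $z_{n}$ of equal argument and $|z_{n}|>1$, the inequality $\sum_{n\geq1}(|z_{n}|-1)^{-1}\leq1$ holds \emph{if and only if} $f$ is starlike in $\mathbb{D}$ and all of its derivatives are close-to-convex there. Once $\nu\geq\nu^{*}(q)$ guarantees the hypothesis $|z_{n}|=j_{\nu,n}^{2}(q)>1$, the entire theorem reduces to the single computation $\sum_{n\geq1}\bigl(j_{\nu,n}^{2}(q)-1\bigr)^{-1}\leq1\iff h_{\nu}'(1;q)\geq0\iff\nu\geq\nu_{0}(q)$. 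This lemma is exactly the packaged form of the Kaplan-type analysis you were hoping to perform by hand; without it (or an equivalent substitute that you actually prove), your proposal does not close.
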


The paper is organized as follows. Section 2 contains the preliminary results together with their proofs, while in Section 3 we present the proofs of the main results. In Section 4 we present some consequence of the Hadamard factorizations to Rayleigh sums of the $q$-Bessel functions under discussion and formulate some open problems.

\section{Preliminaries}
\setcounter{equation}{0}

\subsection{The Hadamard factorization for $q$-Bessel functions} The following preliminary results are useful in the sequel. The next infinite product representations are natural $q$-extensions of the well-known Hadamard factorization for Bessel functions of the first kind.

\begin{lemma}
\label{lem1} If $\nu>-1,$ then $z\mapsto\mathcal{J}_{\nu}^{(2)}(z;q)=2^{\nu}c_{\nu}(q)z^{-\nu}J_{\nu}^{(2)}(z;q)$ and $z\mapsto\mathcal{J}_{\nu}^{(3)}(z;q)=c_{\nu}(q)z^{-\nu}J_{\nu}^{(3)}(z;q)$ are entire functions of order $\rho =0.$
Consequently, their Hadamard factorization for $z\in\mathbb{C}$ are of the form
\begin{equation}
\mathcal{J}_{\nu}^{(2)}(z;q)=\prod\limits_{n\geq 1}\left( 1-\frac{z^{2}}{j_{\nu,n}^{2}(q)}%
\right) , \ \
\mathcal{J}_{\nu}^{(3)}(z;q)=\prod\limits_{n\geq 1}\left( 1-\frac{z^{2}}{l_{\nu,n}^{2}(q)}%
\right) ,  \label{1.6}
\end{equation}%
where $j_{\nu,n}(q)$ and $l_{\nu,n}(q)$ are the $n$th positive zeros of the
functions ${J}_{\nu}^{(2)}(\cdot;q)$ and ${J}_{\nu}^{(3)}(\cdot;q).$
\end{lemma}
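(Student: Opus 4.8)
The plan is to first reduce each normalized function to an explicit power series in $z^2$, then verify entireness and compute the order, and finally invoke Hadamard's theorem together with the reality of the zeros. First I would substitute the series definitions. Since $c_{\nu}(q)\cdot(q^{\nu+1};q)_{\infty}/(q;q)_{\infty}=1$, the normalizing prefactor cancels exactly, while $z^{-\nu}$ absorbs the factor $z^{\nu}$ (respectively $(z/2)^{\nu}$) coming from the $n=0$ term. A direct computation then gives
\begin{equation*}
\mathcal{J}_{\nu}^{(2)}(z;q)=\sum_{n\geq0}\frac{(-1)^n\,q^{n(n+\nu)}}{4^n\,(q;q)_n\left(q^{\nu+1};q\right)_n}z^{2n}, \qquad \mathcal{J}_{\nu}^{(3)}(z;q)=\sum_{n\geq0}\frac{(-1)^n\,q^{\frac{1}{2}n(n+1)}}{(q;q)_n\left(q^{\nu+1};q\right)_n}z^{2n}.
\end{equation*}
In particular both functions are even and take the value $1$ at the origin.

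Next I would establish entireness and order zero. Writing the coefficient of $z^{2n}$ as $a_n$, the factors $(q;q)_n$ and $(q^{\nu+1};q)_n$ converge to the finite nonzero limits $(q;q)_{\infty}$ and $(q^{\nu+1};q)_{\infty}$, so they stay bounded away from $0$ and $\infty$. Hence, up to bounded factors, $|a_n|$ behaves like $q^{n(n+\nu)}$ (respectively $q^{\frac{1}{2}n(n+1)}$), which decays super-exponentially because $q\in(0,1)$ forces the quadratic exponent to drive $\log|a_n|\sim n^2\log q\to-\infty$. This super-exponential decay yields an infinite radius of convergence, so both functions are entire. For the order I would apply the Cauchy--Hadamard type formula $\rho=\limsup_{m\to\infty}\big(m\log m\big)/\big({-\log|c_m|}\big)$ to the full Taylor coefficients $c_m$ (with $c_{2n}=a_n$ and $c_{2n+1}=0$). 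Since $-\log|c_{2n}|\sim n^2|\log q|$ grows quadratically in $n$ while the numerator grows only like $2n\log(2n)$, the $\limsup$ is $0$, so $\rho=0$ for both functions.

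Finally I would deduce the Hadamard factorization. Because the order $\rho=0$ is strictly less than $1$, the genus is $0$ and Hadamard's factorization theorem gives a convergent product $\prod_n(1-z/z_n)$ over the nonzero zeros $z_n$, with no exponential factor and, since the value at the origin is $1$, no leading constant or power of $z$. The functions being even forces the zeros to occur in pairs $\pm z_n$, and the decisive input is that all zeros of $J_{\nu}^{(2)}(\cdot;q)$ and $J_{\nu}^{(3)}(\cdot;q)$ are real (a known property of these $q$-Bessel functions, see \cite{ismail,koelink}); thus the nonzero zeros are exactly $\pm j_{\nu,n}(q)$ and $\pm l_{\nu,n}(q)$. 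Pairing $(1-z/j_{\nu,n}(q))(1+z/j_{\nu,n}(q))=1-z^2/j_{\nu,n}^2(q)$, and similarly with $l_{\nu,n}(q)$, then produces the claimed products \eqref{1.6}.

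The main obstacle is not the analytic estimate: entireness and the order-zero computation are routine once the quadratic $q$-power is isolated. Rather, it is justifying that the zeros are real (and simple), so that the even, order-zero factorization collapses to a genuine product in $z^2$ with the positive zeros $j_{\nu,n}(q)$ and $l_{\nu,n}(q)$; this reality is the one external fact on which the argument must lean.
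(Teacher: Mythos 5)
Your proposal is correct and follows essentially the same route as the paper: expand $\mathcal{J}_{\nu}^{(s)}(\cdot;q)$ as an even power series, use the quadratic $q$-exponent in the coefficients (with $(q;q)_n$ and $(q^{\nu+1};q)_n$ bounded) to get order $\rho=0$ from the coefficient formula for the order, import the reality and simplicity of the zeros from Ismail and Koelink--Swarttouw, and conclude by Hadamard's theorem with genus zero. Your additional remarks on the evenness pairing $\pm z_n$ and the absence of exponential factors merely make explicit what the paper leaves implicit.
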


\begin{proof}[\bf Proof]
Since
$$\mathcal{J}_{\nu}^{(2)}(z;q)=2^{\nu}c_{\nu}(q)z^{-\nu}J_{\nu}^{(2)}(z;q)=\sum_{n\geq 0}\frac{(-1)^nz^{2n}q^{n(n+\nu)}}{2^{2n}(q;q)_n\left(q^{\nu+1};q\right)_n},$$
$$\mathcal{J}_{\nu}^{(3)}(z;q)=c_{\nu}(q)z^{-\nu}J_{\nu}^{(3)}(z;q)=\sum_{n\geq 0}\frac{(-1)^nz^{2n}q^{\frac{1}{2}n(n+1)}}{(q;q)_n\left(q^{\nu+1};q\right)_n},$$
it follows that the growth orders of the even entire functions $z\mapsto\mathcal{J}_{\nu}^{(2)}(z;q)$ and $z\mapsto\mathcal{J}_{\nu}^{(3)}(z;q)$ are zero. Namely, we have that
$$\lim_{n\to\infty}\frac{n\log n}{\log(q;q)_n+\log\left(q^{\nu+1};q\right)_n+2n\log2-n(n+\nu)\log q}=0,$$
$$\lim_{n\to\infty}\frac{n\log n}{\log(q;q)_n+\log\left(q^{\nu+1};q\right)_n-\frac{1}{2}n(n+1)\log q}=0,$$
since as $n\to \infty$ we have $(q;q)_n\to(q;q)_{\infty}<\infty$ and $\left(q^{\nu+1};q\right)_n\to\left(q^{\nu+1};q\right)_{\infty}<\infty.$ On the other hand, we know that the zeros $j_{\nu,n}(q),$ $n\in\mathbb{N},$ and $\l_{\nu,n}(q),$ $n\in\mathbb{N},$ are real and simple, according to \cite[Theorem 4.2]{ismail} and \cite[Theorem 3.4]{koelink}, and with this the rest of the proof of \eqref{1.6} follows by applying
Hadamard's Theorem \cite[p. 26]{levin}.  
\end{proof}

\subsection{Quotients of power series} We will also need the following result, see \cite{biernacki,pv}:

\begin{lemma}\label{lempower}
Consider the power series $f(x)=\displaystyle\sum_{n\geq 0}a_{n}x^n$ and $g(x)=\displaystyle\sum_{n\geq 0}b_{n}x^n$,
where $a_{n}\in \mathbb{R}$ and $b_{n}>0$ for all $n\geq 0$. Suppose that both series converge on $(-r,r)$, for some $r>0$. If the
sequence $\lbrace a_n/b_n\rbrace_{n\geq 0}$ is increasing (decreasing), then the function $x\mapsto{f(x)}/{g(x)}$ is increasing
(decreasing) too on $(0,r)$. The result remains true for the power series
$$f(x)=\displaystyle\sum_{n\geq 0}a_{n}x^{2n}\ \ \ \mbox{and}\ \ \ g(x)=\displaystyle\sum_{n\geq 0}b_{n}x^{2n}.$$
\end{lemma}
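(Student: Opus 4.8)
The plan is to reduce the statement to a sign computation for the derivative of the quotient. Since $b_n>0$ for every $n$, the denominator $g(x)=\sum_{n\geq 0}b_nx^n$ satisfies $g(x)\geq b_0>0$ on $[0,r)$, so $h(x)=f(x)/g(x)$ is well defined and differentiable there, and
\begin{equation*}
h'(x)=\frac{f'(x)g(x)-f(x)g'(x)}{g(x)^2}.
\end{equation*}
Because $g(x)^2>0$, the sign of $h'$ is exactly the sign of the numerator $f'g-fg'$. Thus it suffices to show that $f'(x)g(x)-f(x)g'(x)\geq 0$ on $(0,r)$ when $\{a_n/b_n\}$ is increasing, and $\leq 0$ when it is decreasing.

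First I would differentiate term by term, which is legitimate inside the interval of convergence, and form the Cauchy products. A direct expansion gives
\begin{equation*}
f'(x)g(x)-f(x)g'(x)=\sum_{n\geq 0}\sum_{m\geq 0}(n-m)a_nb_m\,x^{n+m-1}.
\end{equation*}
Since both series converge absolutely on $(-r,r)$, this double series may be rearranged; grouping the contributions of the ordered pairs $(n,m)$ and $(m,n)$ with $n>m$ collapses the expression to the antisymmetric form
\begin{equation*}
f'(x)g(x)-f(x)g'(x)=\sum_{n>m}(n-m)\bigl(a_nb_m-a_mb_n\bigr)\,x^{n+m-1}.
\end{equation*}

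The key observation is then that $a_nb_m-a_mb_n=b_nb_m\bigl(a_n/b_n-a_m/b_m\bigr)$, so the monotonicity hypothesis on $\{a_n/b_n\}$ controls the sign of every summand. If the sequence is increasing, then for $n>m$ we have $a_n/b_n\geq a_m/b_m$, so each of the factors $(n-m)$, $b_nb_m$ and $\bigl(a_n/b_n-a_m/b_m\bigr)$ is nonnegative; since also $x^{n+m-1}>0$ for $x\in(0,r)$, the whole sum is nonnegative. Hence $h'\geq 0$ and $h$ is increasing, and the decreasing case follows by reversing the inequalities. For the even case I would set $t=x^2$, so that $f(x)/g(x)=F(t)/G(t)$ with $F(t)=\sum_{n\geq 0}a_nt^n$ and $G(t)=\sum_{n\geq 0}b_nt^n$ convergent on $(0,r^2)$; the first part shows $F/G$ is monotone there, and composing with the increasing map $x\mapsto x^2$ on $(0,r)$ preserves the monotonicity.

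I expect the only genuine subtlety to be the bookkeeping in the rearrangement of the double sum, namely justifying the interchange of summation through absolute convergence within the radius and pairing $(n,m)$ with $(m,n)$ correctly to produce the antisymmetric combination $a_nb_m-a_mb_n$. Once that identity is established the sign analysis is immediate, so this is where I would concentrate the care.
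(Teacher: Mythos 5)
Your proof is correct. Note that the paper does not actually prove Lemma~\ref{lempower}: it is quoted as a known result with a pointer to the references \cite{biernacki,pv}, and the argument you give --- writing $f'g-fg'$ as a double series, pairing $(n,m)$ with $(m,n)$ to obtain $\sum_{n>m}(n-m)b_nb_m\left(a_n/b_n-a_m/b_m\right)x^{n+m-1}$, and reading off the sign --- is precisely the classical Biernacki--Krzy\.z argument found in those sources, so you have in effect reconstructed the standard proof rather than found a different route. The rearrangement is justified exactly as you say (absolute convergence of $\sum n|a_n||x|^{n-1}$ and $\sum b_m|x|^m$ inside the radius dominates the double sum), and the reduction of the even case via $t=x^2$ is the intended one. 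The only point worth flagging is that your argument as written yields weak monotonicity of $f/g$ from weak monotonicity of $\{a_n/b_n\}$; if strict monotonicity is wanted (as in the application in the proof of Theorem~\ref{theo1}, where the quotient must cross a horizontal line only once), one observes that at least one off-diagonal term is strictly positive whenever the sequence is not constant, which your decomposition delivers immediately.
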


\subsection{Zeros of polynomials and entire functions, and the Laguerre-P\'olya class} In this subsection
we provide the necessary information about polynomials and entire functions with real zeros. An algebraic polynomial
is called hyperbolic if all its zeros are real. We note that the simple statement that two real polynomials $p$ and $q$ posses real and interlacing zeros if and only if any linear combinations of $p$ and $q$ is a hyperbolic polynomial is sometimes called Obrechkoff's theorem. We formulate the following specific statement that we shall need, see \cite{lommel}.
\begin{lemma} \label{OLem}
Let $p(x)=1-a_1 x +a_2 x^2 -a_3 x^3 + \cdots +(-1)^n a_n x^n = (1-x/x_1)\cdots (1-x/x_n)$ be a hyperbolic polynomial with positive zeros
$0< x_1\leq x_2 \leq \cdots \leq x_n$, and normalized by $p(0)=1$. Then, for any constant $C$, the polynomial $q(x) = C p(x) - x\, p'(x)$ is hyperbolic. Moreover, the smallest
zero $\eta_1$   belongs to the interval $(0,x_1)$ if and only if $C<0$.
\end{lemma}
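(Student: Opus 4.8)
The plan is to pass to the logarithmic derivative and factor $p$ out of $q$. Since $p(x)=\prod_{k=1}^{n}(1-x/x_k)$ with all $x_k>0$, a direct computation gives $p'(x)/p(x)=\sum_{k=1}^{n}1/(x-x_k)$, so that
\[
q(x)=Cp(x)-xp'(x)=p(x)\,R(x),\qquad R(x):=C-\sum_{k=1}^{n}\frac{x}{x-x_k}.
\]
At a simple zero $x_k$ of $p$ one has $q(x_k)=-x_k p'(x_k)\neq 0$, so $q$ never vanishes there; hence the real zeros of $q$ are exactly the real zeros of the rational function $R$, together with the zeros of $p$ of multiplicity $\geq 2$ (which I postpone to the end). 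Thus the whole problem reduces to locating the real zeros of $R$.

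The engine of the proof is a monotonicity statement. For each $k$ the summand satisfies $\frac{d}{dx}\frac{x}{x-x_k}=\frac{-x_k}{(x-x_k)^2}<0$, so $R$ is strictly increasing on every interval containing no pole $x_k$. Assume first that the zeros are distinct, $0<x_1<\cdots<x_n$. On each bounded gap $(x_k,x_{k+1})$ we have $R(x)\to-\infty$ as $x\downarrow x_k$ and $R(x)\to+\infty$ as $x\uparrow x_{k+1}$, so by monotonicity $R$ has exactly one zero there; this yields $n-1$ zeros, all lying in $(x_1,x_n)$. The leading coefficient of $q$ is $(-1)^n(C-n)/(x_1\cdots x_n)$, so $\deg q=n$ unless $C=n$, in which case these $n-1$ zeros already exhaust $q$ and it is hyperbolic with smallest zero in $(x_1,x_2)$.

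It remains to find the last zero via the two unbounded intervals, and here the value $R(0)=C$ is decisive. On $(x_n,\infty)$ the function increases from $-\infty$ to the limit $C-n$, producing one zero precisely when $C>n$; on $(-\infty,x_1)$ it increases from the limit $C-n$ to $+\infty$, producing one zero precisely when $C<n$. In the latter case, since $R$ is increasing and $R(0)=C$, this zero lies in $(0,x_1)$ exactly when $C<0$, in $(-\infty,0)$ when $0<C<n$, and equals $0$ when $C=0$. Assembling the three intervals shows that in every case $q$ carries $\deg q$ real zeros, hence is hyperbolic; and since every zero other than the one in $(-\infty,x_1)$ is positive and $\geq x_1$, comparing the sign of $C$ with the location of that distinguished zero gives that the smallest zero $\eta_1$ lies in $(0,x_1)$ if and only if $C<0$.

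The remaining point, which I expect to be the only delicate one, is the case of repeated zeros: if some $x_k$ has multiplicity $m\geq 2$ then $q$ inherits a zero of multiplicity $m-1$ there, while the interval analysis above is carried out over the consecutive \emph{distinct} zeros. A bookkeeping of multiplicities reproduces the same total count, so both hyperbolicity and the localization of $\eta_1$ persist. The cleanest alternative is a continuity argument: perturb the $x_k$ to be distinct, apply the case just proved, and pass to the limit using that a locally uniform limit of hyperbolic polynomials of fixed degree is again hyperbolic; since $R(0)=C$ is unaffected by the perturbation, the endpoint criterion for $\eta_1$ survives the limit as well.
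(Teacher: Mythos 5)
Your proof is correct, and it takes a genuinely different (if closely related) route from the paper's. The paper dispatches the lemma in one sentence: apply Rolle's theorem and count sign changes of $q$ at the zeros of $p$ — since $q(x_k)=-x_k\,p'(x_k)$ and $p'$ alternates in sign at consecutive simple zeros of $p$, so does $q$, which already forces $n-1$ zeros into the gaps, and the last zero is then tracked through the signs of $q(0)=C$ and of the leading coefficient $(-1)^n(C-n)/(x_1\cdots x_n)$. You instead divide out $p$ and analyze the rational function $R(x)=C-\sum_{k}x/(x-x_k)$, whose strict monotonicity between consecutive poles is the engine. What your approach buys is precision: you get not just existence but \emph{uniqueness} of a zero in each gap, and the location of the one $C$-dependent zero is read off cleanly from the single value $R(0)=C$ and the limits $C-n$ at $\pm\infty$, giving the full case analysis $C<0$, $C=0$, $0<C<n$, $C=n$, $C>n$ that the sign-change count leaves implicit — in particular the "if and only if" for $\eta_1\in(0,x_1)$, which the paper's sketch does not actually spell out. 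Your handling of the degenerate cases is also sound: when $C=n$ the degree drops to exactly $n-1$ (the coefficient of $x^{n-1}$ in $q$ is $(-1)^{n-1}a_{n-1}\neq0$, and in any event the $n-1$ gap zeros force $\deg q\geq n-1$), and at an $m$-fold zero $x_k$ of $p$ the polynomial $q$ vanishes to exact order $m-1$ because $xp'$ does and $x_k\neq0$, so the multiplicity bookkeeping closes the count; the perturbation alternative also works, since $q(0)=C\neq0$ for $C<0$ and $R\to+\infty$ at $x_1^-$ prevent the distinguished zero from escaping $(0,x_1)$ in the limit.
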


The proof of this result is straightforward; it is enough to apply Rolle's theorem and then count the sign changes of the linear combination at the zeros of $p$. We refer to \cite{BDR, DMR} for further results on monotonicity and asymptotics of zeros of linear combinations of hyperbolic polynomials.

A real entire function $\psi$ belongs to the Laguerre-P\'{o}lya class $\lp$ if it can be represented in the form
$$
\psi(x) = c x^{m} e^{-a x^{2} + \beta x} \prod_{k\geq1}
\left(1+\frac{x}{x_{k}}\right) e^{-\frac{x}{x_{k}}},
$$
with $c,$ $\beta,$ $x_{k} \in \mathbb{R},$ $a \geq 0,$ $m\in
\mathbb{N} \cup\{0\},$ $\sum x_{k}^{-2} < \infty.$
Similarly, $\phi$  is  said to be of
type I in the Laguerre-P\'{o}lya class, written $\varphi \in \lpi$,
if $\phi(x)$ or $\phi(-x)$ can be represented as
$$
\phi(x) = c x^{m} e^{\sigma x} \prod_{k\geq1}\left(1+\frac{x}{x_{k}}\right),
$$
with $c \in \mathbb{R},$ $\sigma \geq 0,$ $m \in
\mathbb{N}\cup\{0\},$ $x_{k}>0,$ $\sum 1/x_{k} < \infty.$
The class $\lp$ is the complement of the space of hyperbolic
polynomials in the topology induced by the uniform convergence
on the compact sets of the complex plane while $\lpi$ is the complement
of the hyperbolic polynomials whose zeros posses a preassigned constant sign.
Given an entire function $\varphi$ with the Maclaurin expansion
$$\varphi(x) = \sum_{k\geq0}\gamma_{k} \frac{x^{k}}{k!},$$
its Jensen polynomials are defined by
$$
g_n(\varphi;x) = g_{n}(x) = \sum_{k=0}^{n} {n\choose k} \g_{k}
x^k.
$$
Jensen  proved the following relation in \cite{Jen12}:

\begin{lemma}\label{JTh}
The function $\varphi$ belongs to $\lp$ ($\lpi$, respectively) if and only if
all the polynomials $g_n(\varphi;x)$, $n\in\mathbb{N}$, are hyperbolic (hyperbolic
with zeros of equal sign).
Moreover, the sequence $g_n(\varphi;z/n)$ converges locally
uniformly to $\varphi(z)$.
\end{lemma}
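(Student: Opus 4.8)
The plan is to prove the three assertions separately, in increasing order of depth: the convergence statement first, since it drives the easy implication, and the necessity last, since it carries the genuine content. Throughout write $\varphi(x)=\sum_{k\ge0}\g_k x^k/k!$ with $\g_k=\varphi^{(k)}(0)\in\mathbb{R}$, so that $g_n(\varphi;x)=\sum_{k=0}^n\binom{n}{k}\g_k x^k$. The organizing identity I would use is that $g_n(\varphi;\cdot)$ is, up to reversal of the variable, the image of $x^n$ under the (finite, since $D^k x^n=0$ for $k>n$) differential operator $\varphi(D)=\sum_{k\ge0}(\g_k/k!)D^k$: indeed $\varphi(D)x^n=\sum_{k=0}^n\binom{n}{k}\g_k x^{n-k}$, whose reversal $x^n\,[\varphi(D)x^n]_{x\mapsto1/x}$ is exactly $g_n(\varphi;x)$. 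Since reversal sends real zeros to their reciprocals (together, possibly, with zeros at the origin), $g_n(\varphi;\cdot)$ is hyperbolic if and only if $\varphi(D)x^n$ is.

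For the convergence note that $\binom{n}{k}n^{-k}=\tfrac{1}{k!}\prod_{j=0}^{k-1}(1-j/n)$, so
\[
g_n(\varphi;z/n)=\sum_{k=0}^n \frac{\g_k z^k}{k!}\,c_{n,k},\qquad c_{n,k}=\prod_{j=0}^{k-1}\Bigl(1-\tfrac{j}{n}\Bigr)\in[0,1],
\]
with $c_{n,k}\to1$ as $n\to\infty$ for each fixed $k$. Because $\varphi$ is entire, $\sum_{k\ge0}|\g_k|R^k/k!<\infty$ for every $R$; splitting $g_n(\varphi;z/n)-\varphi(z)$ into a finite head (where $c_{n,k}\to1$) and a tail (dominated by the convergent majorant) gives $g_n(\varphi;z/n)\to\varphi(z)$ uniformly on $|z|\le R$, which is the ``moreover'' claim. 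The sufficiency is then immediate: if every $g_n(\varphi;\cdot)$ is hyperbolic, so is $z\mapsto g_n(\varphi;z/n)$, the positive dilation $z\mapsto z/n$ leaving the zeros real; by the convergence above $\varphi$ is a locally uniform limit of hyperbolic polynomials, hence $\varphi\in\lp$ by the description of $\lp$ as the closure of the hyperbolic polynomials in the topology of locally uniform convergence recorded in the text. If moreover the zeros of each $g_n$ share a common sign, the dilated zeros keep it and the limit lies in $\lpi$, giving the parenthetical refinement.

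For the necessity I would first reduce to polynomials. If $\varphi\in\lp$ it is a locally uniform limit of hyperbolic polynomials $p_m$; derivatives converge locally uniformly, so $\g_k(p_m)\to\g_k$, whence for each fixed $n$ the degree-$\le n$ polynomials $g_n(p_m;\cdot)$ converge to $g_n(\varphi;\cdot)$ uniformly on compacta, and Hurwitz's theorem transfers hyperbolicity to the limit. It therefore suffices to prove that the Jensen polynomials of a \emph{hyperbolic} polynomial $p$ are hyperbolic. Here the reduction pays off: factoring $p(x)=c\prod_{j}(x-r_j)$ with $r_j\in\mathbb{R}$ turns the operator into a \emph{finite} product $p(D)=c\prod_j(D-r_j)$ of real linear factors, so by the identity of the first paragraph it is enough to check that each factor $q\mapsto(D-r)q$ preserves hyperbolicity of polynomials.

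This last point is the crux and the main obstacle. I would settle it by writing $(D-r)q=e^{rx}\tfrac{d}{dx}\bigl(e^{-rx}q\bigr)$ and using that $e^{-rx}q\in\lp$ (product of the $\lp$ functions $e^{-rx}$ and $q$), that $\lp$ is closed under differentiation, and that a polynomial lying in $\lp$ is hyperbolic; together these give $(D-r)q\in\lp$, hence hyperbolic. Applying the factors successively shows $p(D)x^n$, and thus $g_n(p;\cdot)$, is hyperbolic, the sign of the zeros being tracked through the same steps to yield the $\lpi$ statement (equivalently, by P\'olya--Schur, $\{\g_k\}$ is then a multiplier sequence and $g_n$ is its image of $(1+x)^n$). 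I would stress that the reduction to hyperbolic polynomials is what keeps the operator a finite product of elementary factors: running the operator argument directly for transcendental $\varphi\in\lp$ would force one to control the Gaussian factor $e^{-aD^2}$ in its Weierstrass form, whose hyperbolicity-preservation is a substantially deeper fact than the linear case used here.
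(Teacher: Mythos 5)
The paper does not prove this lemma at all: it is quoted as Jensen's classical theorem with a bare citation to \cite{Jen12}, so there is no in-paper argument to compare against. Judged on its own, your proof is essentially correct and follows the classical Hermite--Poulain/Laguerre line. The convergence argument via $\binom{n}{k}n^{-k}=\frac{1}{k!}\prod_{j<k}(1-j/n)$ and a dominated tail is standard and sound, and it correctly yields sufficiency once one accepts the description of $\lp$ (resp.\ $\lpi$) as the locally uniform closure of the hyperbolic polynomials (resp.\ those with zeros of one sign), which is exactly what the paper records. The necessity is also structured correctly: reduce to hyperbolic polynomials by Hurwitz, then use the identity $g_n(\varphi;x)=x^n\bigl[\varphi(D)x^n\bigr]_{x\mapsto 1/x}$ and factor $p(D)$ into real linear factors. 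Two points deserve tightening. First, your crux step invokes ``$\lp$ is closed under differentiation,'' a theorem of essentially the same vintage and depth as the statement being proved; since here the function is only $e^{-rx}q$ with $q$ a polynomial, you can replace that appeal by an elementary Rolle-plus-degree count (in the spirit of the paper's Lemma~\ref{OLem}): $F=e^{-rx}q$ has $d=\deg q$ real zeros, $F'$ picks up at least $d-1$ of them, and $q'-rq$ has degree $d$ when $r\neq0$, so its last zero is forced to be real because non-real zeros pair up. Second, the $\lpi$ refinement in the necessity direction is asserted (``the sign being tracked'') rather than proved; the clean fix is to normalize so that $\varphi\in\lpi$ has all $\gamma_k\geq0$ (it is a product of factors with nonnegative Taylor coefficients), whence $g_n$ has nonnegative coefficients and its real zeros are automatically all nonpositive. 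You should also dispose of the degenerate case $g_n\equiv0$ (e.g.\ $p=cx^{m}$ with $m>n$) by convention. With these small repairs the argument is complete.
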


Further information about the Laguerre-P\'olya class can be found in
\cite{Obr, RS} while \cite{DC} contains references  and additional facts
about the Jensen polynomials in general and also about those related to the
Bessel function.

The following result is a key tool in the proof of Theorems \ref{theo1} and \ref{theo2}.

\begin{lemma}\label{ThZ} Let $\nu>-1$ and $a<0$.
Then the functions $z\mapsto (2a+\nu)J_{\nu}^{(2)}(z;q)-z\cdot dJ_{\nu}^{(2)}(z;q)/dz$ and $z\mapsto (2a+\nu)J_{\nu}^{(3)}(z;q)-z\cdot dJ_{\nu}^{(3)}(z;q)/dz$
can be represented in the form
$$c_{\nu}(q)\left( (2a+\nu)J_{\nu}^{(2)}(z;q)-z\cdot dJ_{\nu}^{(2)}(z;q)/dz \right) = 2\left(\frac{z}{2}\right)^{\nu} \phi_\nu(z;q),$$
$$c_{\nu}(q)\left( (2a+\nu)J_{\nu}^{(3)}(z;q)-z\cdot dJ_{\nu}^{(3)}(z;q)/dz \right) = 2{z}^{\nu} \psi_\nu(z;q),$$
where $\phi_\nu(\cdot;q)$ and $\psi_\nu(\cdot;q)$ are entire functions which belongs to the Laguerre-P\'olya class $\lp$. Moreover, the smallest positive zero of $\phi_\nu(\cdot;q)$ does not exceed the first positive zero $j_{\nu,1}(q),$ while the smallest positive zero of $\psi_\nu(\cdot;q)$ is less than $l_{\nu,1}(q)$.
\end{lemma}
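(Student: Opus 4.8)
The plan is to exploit the explicit power series expansions of the normalized functions $\mathcal{J}_\nu^{(2)}$ and $\mathcal{J}_\nu^{(3)}$ provided in the proof of Lemma \ref{lem1}, combine them with the logarithmic-derivative structure induced by the Hadamard factorizations \eqref{1.6}, and then invoke the Jensen-polynomial criterion of Lemma \ref{JTh} to place $\phi_\nu(\cdot;q)$ and $\psi_\nu(\cdot;q)$ in the Laguerre-P\'olya class. First I would compute the left-hand side directly from the series. Writing $J_\nu^{(s)}(z;q) = c_\nu(q)^{-1} (z/2)^\nu \mathcal{J}_\nu^{(2)}(z;q)$ (and the analogous relation for $s=3$), the combination $(2a+\nu)J_\nu^{(s)} - z\, dJ_\nu^{(s)}/dz$ produces, after factoring out the leading power $(z/2)^\nu$ or $z^\nu$, an even entire function; I would identify its Maclaurin coefficients explicitly. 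For $\mathcal{J}_\nu^{(2)}(z;q) = \sum_{n\geq 0} b_n z^{2n}$ with $b_n = (-1)^n q^{n(n+\nu)} / (2^{2n}(q;q)_n (q^{\nu+1};q)_n)$, the operator $z\,d/dz$ acts on the $(z/2)^\nu$-stripped series by multiplying the $z^{2n}$ term by $2n$, so that $\phi_\nu(z;q)$ has coefficients proportional to $(2a - 2n)\, b_n = 2(a-n) b_n$, each of which carries the sign $(-1)^n$ since $a<0$ forces $a-n<0$. This even structure means I should work with the substitution $w = z^2$, giving a genuine power series $\Phi_\nu(w) = \sum_{n\geq 0} 2(a-n) b_n w^n$ with coefficients of strictly alternating sign.

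The key structural observation is that multiplication by the operator $(2a+\nu) - z\,d/dz$ on the $(z/2)^\nu$-normalized factor is exactly the kind of linear operation that preserves the Laguerre-P\'olya class, and this is where Lemma \ref{OLem} (Obrechkoff) enters at the level of Jensen polynomials. Since $\mathcal{J}_\nu^{(s)}(\cdot;q) \in \lp$ by Lemma \ref{lem1} (an entire function of order zero with only real zeros is automatically in $\lp$), its Jensen polynomials $g_n$ are all hyperbolic with positive zeros when viewed in the variable $w=z^2$. The operator producing $\phi_\nu$ corresponds, on each Jensen polynomial, precisely to the transformation $p(x)\mapsto C\,p(x) - x\,p'(x)$ of Lemma \ref{OLem} with $C = 2a+\nu < \nu$ suitably shifted; I would verify that the combination $(2a+\nu)\mathcal{J}_\nu^{(s)} - z\,(\mathcal{J}_\nu^{(s)})' - \nu \mathcal{J}_\nu^{(s)}$ that arises after the chain rule reduces the problem to the Obrechkoff operator with a strictly negative constant, so that Lemma \ref{OLem} guarantees each transformed Jensen polynomial remains hyperbolic with positive zeros. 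Passing to the locally uniform limit via Lemma \ref{JTh} then shows $\phi_\nu(\cdot;q)\in\lp$, and identically $\psi_\nu(\cdot;q)\in\lp$.

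For the location of the smallest positive zero, I would use the interlacing guaranteed by the second assertion of Lemma \ref{OLem}: since the operative constant $C$ is negative (this is exactly where the hypothesis $a<0$ is indispensable), the smallest zero $\eta_1$ of each transformed Jensen polynomial lies strictly inside $(0, x_1)$, where $x_1$ is the smallest zero of the original hyperbolic polynomial. Taking limits, the smallest positive zero of $\phi_\nu(\cdot;q)$ cannot exceed the smallest positive zero of $\mathcal{J}_\nu^{(2)}(\cdot;q)$, which by \eqref{1.6} is $j_{\nu,1}(q)$; the corresponding bound by $l_{\nu,1}(q)$ for $\psi_\nu$ follows the same way. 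The main obstacle I anticipate is bookkeeping the normalization carefully: the chain rule on $z^\nu$ (or $(z/2)^\nu$) mixes the $\nu$-dependent factor into the differential operator, and I must track precisely how the constant $2a+\nu$ translates into the Obrechkoff constant $C$ so that its sign is controlled by $a<0$ alone. Once that translation is pinned down, the limiting argument from hyperbolic Jensen polynomials to membership in $\lp$ and to the zero-interlacing conclusion is routine.
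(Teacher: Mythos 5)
Your proposal is correct and follows essentially the same route as the paper: factor out $(z/2)^{\nu}$ (resp.\ $z^{\nu}$) so that the operator becomes $2a\,\mathcal{J}_{\nu}^{(s)}-z\,d\mathcal{J}_{\nu}^{(s)}/dz$, pass to the squared variable so that it becomes the Obrechkoff operator $C\,p-\zeta p'$ with $C=a<0$ acting on the (hyperbolic, positive-zero) Jensen polynomials of $\mathcal{J}_{\nu}^{(s)}$, and then use Lemmas \ref{OLem} and \ref{JTh} to conclude membership in $\lpi$ and the bound on the first zero. The only cosmetic differences are your choice of substitution $w=z^{2}$ instead of $\zeta=z^{2}/4$ in the Jackson case and a harmless slip on the sign $(-1)^{n+1}$ of the coefficients $2(a-n)b_{n}$, neither of which affects the argument.
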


\begin{proof}[\bf Proof]
It is clear from the infinite product representation of $z\mapsto \mathcal{J}_{\nu}^{(2)}(z;q)=2^{\nu}c_{\nu}(q)z^{-\nu}J_{\nu}^{(2)}(z;q)$ that this function belongs to the Laguerre-P\'olya class of entire functions (since the exponential factors in the infinite product are canceled because of the symmetry of the zeros $\pm j_{\nu,n}(q),$ $n\in\mathbb{N},$ with respect to the origin). This implies that the function $z\mapsto \mathcal{J}_{\nu}^{(2)}(2\sqrt{z};q)=\tilde{\mathcal{J}}_{\nu}(z;q)$ belongs to $\lpi.$ Then it follows form Lemma \ref{JTh} that its Jensen polynomials
$$g_n(\tilde{\mathcal{J}}_{\nu}(\cdot;q);\zeta) = \sum_{k=0}^n  {n\choose k} \frac{k!}{\left(q;q\right)_k \left(q^{\nu+1};q\right)_k}q^{k(\nu+k)}\left(-\zeta \right)^{k}$$
are all hyperbolic. However, observe that the Jensen polynomials of $\tilde{\phi}_\nu(z;q)= \phi_\nu(2\sqrt{z};q)$ are simply
$$
g_n(\tilde{\phi}_\nu;\zeta)=a g_n(\tilde{\mathcal{J}}_{\nu}(\cdot;q);\zeta) - \zeta\, g_n'(\tilde{\mathcal{J}}_{\nu}(\cdot;q);\zeta).
$$
Lemma \ref{OLem} implies that all zeros of $g_n(\tilde{\phi}_\nu;\zeta)$ are real and positive and that the smallest one
precedes the first zero of  $g_n(\tilde{\mathcal{J}}_{\nu}(\cdot;q);\zeta)$. In view of Lemma \ref{JTh}, the latter conclusion immediately yields that $\tilde{\phi}_\nu \in \lpi$ and that its first zero precedes $j_{\nu,1}(q)$. Finally, the first part of the statement of the lemma follows after we go back
from  $\tilde{\phi}_\nu(\cdot;q)$ to $\phi_\nu(\cdot;q)$ by setting $\zeta=\frac{z^2}{4}$.

Similarly, because of Lemma \ref{lem1} the function $z\mapsto \mathcal{J}_{\nu}^{(3)}(z;q)=c_{\nu}(q)z^{-\nu}J_{\nu}^{(3)}(z;q)$ belongs to the Laguerre-P\'olya class of entire functions, which implies that the function $z\mapsto \mathcal{J}_{\nu}^{(3)}(\sqrt{z};q)=\overline{\mathcal{J}}_{\nu}(z;q)$ belongs to $\lpi.$ Then it follows from Lemma \ref{JTh} that its Jensen polynomials
$$g_n(\overline{\mathcal{J}}_{\nu}(\cdot;q);\zeta) = \sum_{k=0}^n  {n\choose k} \frac{k!}{\left(q;q\right)_k \left(q^{\nu+1};q\right)_k}q^{\frac{1}{2}k(k+1)}\left(-\zeta \right)^{k}$$
are all hyperbolic. However, observe that the Jensen polynomials of $\tilde{\psi}_\nu(z;q)= \psi_\nu(\sqrt{z};q)$ are simply
$$
g_n(\tilde{\psi}_\nu;\zeta)=a g_n(\overline{\mathcal{J}}_{\nu}(\cdot;q);\zeta) - \zeta\, g_n'(\overline{\mathcal{J}}_{\nu}(\cdot;q);\zeta).
$$
Lemma \ref{OLem} implies that all zeros of $g_n(\tilde{\psi}_\nu;\zeta)$ are real and positive and that the smallest one
precedes the first zero of  $g_n(\overline{\mathcal{J}}_{\nu}(\cdot;q);\zeta)$. In view of Lemma \ref{JTh}, the latter conclusion immediately yields that $\tilde{\psi}_\nu \in \lpi$ and that its first zero precedes $l_{\nu,1}(q)$. Thus, the second part of the statement of this lemma follows after we go back
from  $\tilde{\psi}_\nu(\cdot;q)$ to $\psi_\nu(\cdot;q)$ by setting $\zeta={z^2}$.
\end{proof}

The following result is an immediate consequence of Lemma \ref{ThZ} and is the $q$-extension to Jackson and Hahn-Exton $q$-Bessel functions of the well known result that if $\nu>-1$ and $c$ is a constant such that $c+\nu>0,$ then the Dini function $z\mapsto zJ_{\nu}'(z)+cJ_{\nu}(z)$ has only real zeros and its first positive zero does not exceed the first positive zero of $J_{\nu},$ see \cite[p. 597]{Wat} and \cite[p. 11]{muld}.

\begin{lemma}\label{zerodini}
 If $\nu>-1$ and $c$ is a constant such that $c+\nu>0,$ then the Jackson $q$-Dini function $z\mapsto z\cdot dJ_{\nu}^{(2)}(z;q)/dz+cJ_{\nu}^{(2)}(z;q)$ has only real zeros and its first positive zero does not exceed $j_{\nu,1}(q).$ Similarly, under the same assumptions the Hahn-Exton $q$-Dini function $z\mapsto z\cdot dJ_{\nu}^{(3)}(z;q)/dz+cJ_{\nu}^{(3)}(z;q)$ has only real zeros and its first positive zero does not exceed $l_{\nu,1}(q).$
\end{lemma}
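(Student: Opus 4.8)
The plan is to read this lemma off directly from Lemma~\ref{ThZ} by matching the free parameter. First I would note that the Jackson $q$-Dini combination is, up to sign, exactly the function treated there: choosing $2a+\nu=-c$, equivalently $a=-(c+\nu)/2$, one has
$$(2a+\nu)J_{\nu}^{(2)}(z;q)-z\cdot dJ_{\nu}^{(2)}(z;q)/dz=-\left(z\cdot dJ_{\nu}^{(2)}(z;q)/dz+cJ_{\nu}^{(2)}(z;q)\right).$$
The decisive observation is that the hypotheses match perfectly: the constraint $a<0$ of Lemma~\ref{ThZ} is equivalent to $c+\nu>0$, which is precisely the assumption of the present lemma. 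Thus the admissible parameter range is not merely sufficient but forced by the very same inequality.

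With this substitution the first representation in Lemma~\ref{ThZ} becomes
$$c_{\nu}(q)\left(z\cdot dJ_{\nu}^{(2)}(z;q)/dz+cJ_{\nu}^{(2)}(z;q)\right)=-2\left(\frac{z}{2}\right)^{\nu}\phi_{\nu}(z;q),$$
where $\phi_{\nu}(\cdot;q)\in\lp$. Since $c_{\nu}(q)>0$ and the prefactor $-2(z/2)^{\nu}$ vanishes only at the origin while remaining nonzero for every $z>0$, the nonzero zeros of the Jackson $q$-Dini function coincide with the zeros of $\phi_{\nu}(\cdot;q)$. Because $\phi_{\nu}(\cdot;q)$ belongs to the Laguerre-P\'olya class, all of its zeros are real; together with the possible zero at the origin coming from $(z/2)^{\nu}$, this shows that the Jackson $q$-Dini function has only real zeros. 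The bound on the first positive zero then follows from the final assertion of Lemma~\ref{ThZ}: the smallest positive zero of $\phi_{\nu}(\cdot;q)$ does not exceed $j_{\nu,1}(q)$, and by the coincidence of positive zeros just established the same bound holds for the Jackson $q$-Dini function.

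The Hahn-Exton case is handled identically, using the second representation in Lemma~\ref{ThZ}: with $a=-(c+\nu)/2$ one obtains $c_{\nu}(q)(z\cdot dJ_{\nu}^{(3)}(z;q)/dz+cJ_{\nu}^{(3)}(z;q))=-2z^{\nu}\psi_{\nu}(z;q)$ with $\psi_{\nu}(\cdot;q)\in\lp$, and the same reasoning yields real zeros and the bound $l_{\nu,1}(q)$ for the first positive one. Since every step is a verbatim translation of Lemma~\ref{ThZ}, there is no genuine obstacle to overcome here; the only points requiring care are the sign bookkeeping in the substitution $a=-(c+\nu)/2$ and the elementary remark that the prefactor $(z/2)^{\nu}$ (respectively $z^{\nu}$) contributes a zero solely at the origin and never a spurious positive zero that would disturb the coincidence of the positive zeros with those of $\phi_{\nu}$ (respectively $\psi_{\nu}$).
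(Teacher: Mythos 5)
Your proposal is correct and follows exactly the route the paper intends: the paper offers no separate proof, declaring the lemma an immediate consequence of Lemma~\ref{ThZ}, and your substitution $2a+\nu=-c$ (so $a=-(c+\nu)/2<0$ precisely when $c+\nu>0$) together with the sign and prefactor bookkeeping is the correct way to make that deduction explicit.
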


\subsection{The Hadamard factorization of the derivatives of $q$-Bessel functions} The following infinite product representations are natural $q$-extensions of the well-known Hadamard factorization for the derivative of Bessel functions of the first kind.

\begin{lemma}
\label{lemder} If $\nu>0,$ then $z\mapsto (2^{\nu}/\nu)c_{\nu}(q)z^{1-\nu}\cdot dJ_{\nu}^{(2)}(z;q)/dz$ and $z\mapsto (1/\nu)c_{\nu}(q)z^{1-\nu}\cdot dJ_{\nu}^{(3)}(z;q)/dz$ are entire functions of order $\rho =0.$
Consequently, their Hadamard factorization for $z\in\mathbb{C}$ are of the form
\begin{equation}
dJ_{\nu}^{(2)}(z;q)/dz=\frac{\nu\left(\frac{1}{2}z\right)^{\nu-1}}{2c_{\nu}(q)}\prod\limits_{n\geq 1}\left( 1-\frac{z^{2}}{j_{\nu,n}'^{2}(q)}%
\right) , \ \
dJ_{\nu}^{(3)}(z;q)/dz=\frac{\nu z^{\nu-1}}{c_{\nu}(q)}\prod\limits_{n\geq 1}\left( 1-\frac{z^{2}}{l_{\nu,n}'^{2}(q)}%
\right) ,  \label{1.6der}
\end{equation}%
where $j_{\nu,n}'(q)$ and $l_{\nu,n}'(q)$ are the $n$th positive zeros of $z\mapsto d{J}_{\nu}^{(2)}(z;q)/dz$ and $z\mapsto d{J}_{\nu}^{(3)}(z;q)/dz.$
\end{lemma}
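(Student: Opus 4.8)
The plan is to follow closely the argument that established Lemma \ref{lem1}, the only genuinely new ingredient being the reality of the zeros of the derivatives, which I will extract from Lemma \ref{ThZ}. First I would differentiate the defining series termwise and absorb the normalizing constants, obtaining the even Maclaurin expansions
\begin{align*}
\frac{2^{\nu}}{\nu}c_{\nu}(q)z^{1-\nu}\frac{dJ_{\nu}^{(2)}(z;q)}{dz}&=\sum_{n\geq0}\frac{(-1)^n\left(1+\tfrac{2n}{\nu}\right)z^{2n}}{2^{2n}(q;q)_n\left(q^{\nu+1};q\right)_n}q^{n(n+\nu)},\\
\frac{1}{\nu}c_{\nu}(q)z^{1-\nu}\frac{dJ_{\nu}^{(3)}(z;q)}{dz}&=\sum_{n\geq0}\frac{(-1)^n\left(1+\tfrac{2n}{\nu}\right)z^{2n}}{(q;q)_n\left(q^{\nu+1};q\right)_n}q^{\frac12n(n+1)}.
\end{align*}
Denote these two even entire functions by $\Phi_{\nu}^{(2)}(\cdot;q)$ and $\Phi_{\nu}^{(3)}(\cdot;q)$; both satisfy $\Phi_{\nu}^{(s)}(0;q)=1$, and their coefficients differ from those of $\mathcal{J}_{\nu}^{(2)}$ and $\mathcal{J}_{\nu}^{(3)}$ only by the factor $1+2n/\nu$.

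To see that the growth order is $\rho=0$ I would reuse verbatim the estimate from the proof of Lemma \ref{lem1}. Since $\log(1+2n/\nu)=O(\log n)$, inserting this extra factor changes the denominators of the two limits appearing in that proof only by a term of order $\log n$, which is negligible against $-n(n+\nu)\log q\sim n^{2}|\log q|$ (recall $q\in(0,1)$). Hence both limits remain $0$ and the order of $\Phi_{\nu}^{(2)}$ and $\Phi_{\nu}^{(3)}$ is zero.

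The heart of the matter is the reality of the zeros, and here I would invoke Lemma \ref{ThZ}. Writing $J_{\nu}^{(s)}$ as a power of $z$ times $\mathcal{J}_{\nu}^{(s)}$ and differentiating gives the identity $\Phi_{\nu}^{(s)}(z;q)=\mathcal{J}_{\nu}^{(s)}(z;q)+\tfrac{z}{\nu}\,d\mathcal{J}_{\nu}^{(s)}(z;q)/dz$. Comparing with the functions produced in Lemma \ref{ThZ}, one checks that $-\tfrac{\nu}{2}\Phi_{\nu}^{(2)}(z;q)=\phi_{\nu}(z;q)$ and $-\tfrac{\nu}{2}\Phi_{\nu}^{(3)}(z;q)=\psi_{\nu}(z;q)$ precisely for the choice $a=-\nu/2$; the hypothesis $\nu>0$ is exactly what guarantees $a<0$, so Lemma \ref{ThZ} applies and yields $\Phi_{\nu}^{(2)}(\cdot;q),\Phi_{\nu}^{(3)}(\cdot;q)\in\lp$. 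In particular all of their zeros are real, and since the functions are even these zeros occur in symmetric pairs $\pm j_{\nu,n}'(q)$ and $\pm l_{\nu,n}'(q)$.

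Once reality and order zero are in hand the factorization is routine. By Hadamard's theorem \cite[p.~26]{levin} an entire function of order zero has genus zero, so $\Phi_{\nu}^{(2)}(z;q)=\prod_{n\geq1}\left(1-z^{2}/j_{\nu,n}'^{2}(q)\right)$ and $\Phi_{\nu}^{(3)}(z;q)=\prod_{n\geq1}\left(1-z^{2}/l_{\nu,n}'^{2}(q)\right)$, the absence of any exponential prefactor being forced by $\Phi_{\nu}^{(s)}(0;q)=1$. Finally I would solve the defining relations for $dJ_{\nu}^{(s)}(z;q)/dz$ and fix the constant by comparing the lowest-order terms as $z\to0$, recovering \eqref{1.6der}. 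I expect the reality of the zeros of the derivatives, i.e.\ the step invoking Lemma \ref{ThZ} with $a=-\nu/2$, to be the only delicate point; the order computation and the passage through Hadamard's theorem are straightforward adaptations of the proof of Lemma \ref{lem1}.
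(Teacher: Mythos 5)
Your proposal is correct and follows essentially the same route as the paper: the same series expansions, the same order-zero limit computation, reality of the zeros of the derivatives from the Laguerre--P\'olya machinery, and then Hadamard's theorem. The only cosmetic difference is that you invoke Lemma \ref{ThZ} directly with $a=-\nu/2$ (so that $2a+\nu=0$), whereas the paper cites its immediate corollary, Lemma \ref{zerodini} with $c=0$; these are the same statement, and the hypothesis $\nu>0$ enters identically in both.
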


\begin{proof}[\bf Proof]
We have that
$$\frac{1}{\nu}2^{\nu}c_{\nu}(q)z^{1-\nu}\cdot dJ_{\nu}^{(2)}(z;q)/dz=\frac{1}{\nu}\sum_{n\geq 0}\frac{(-1)^n(2n+\nu)z^{2n}q^{n(n+\nu)}}{2^{2n}(q;q)_n\left(q^{\nu+1};q\right)_n},$$
$$\frac{1}{\nu}c_{\nu}(q)z^{1-\nu}\cdot dJ_{\nu}^{(3)}(z;q)/dz=\frac{1}{\nu}\sum_{n\geq 0}\frac{(-1)^n(2n+\nu)z^{2n}q^{\frac{1}{2}n(n+1)}}{(q;q)_n\left(q^{\nu+1};q\right)_n},$$
and
$$\lim_{n\to\infty}\frac{n\log n}{\log(q;q)_n+\log\left(q^{\nu+1};q\right)_n+2n\log2-n(n+\nu)\log q-\log(2n+\nu)}=0,$$
$$\lim_{n\to\infty}\frac{n\log n}{\log(q;q)_n+\log\left(q^{\nu+1};q\right)_n-\frac{1}{2}n(n+1)\log q-\log(2n+\nu)}=0,$$
since as $n\to \infty$ we have $(q;q)_n\to(q;q)_{\infty}<\infty$ and $\left(q^{\nu+1};q\right)_n\to\left(q^{\nu+1};q\right)_{\infty}<\infty.$ Moreover, we know that the zeros $j_{\nu,n}'(q),$ $n\in\mathbb{N},$ and $\l_{\nu,n}'(q),$ $n\in\mathbb{N},$ are real for $\nu>0$, according to Lemma \ref{zerodini}, and with this the rest of the proof of \eqref{1.6der} follows by applying Hadamard's Theorem \cite[p. 26]{levin}.  
\end{proof}

\subsection{The Hadamard factorization of $q$-Dini functions} The following infinite product representations are $q$-extensions of the known Hadamard factorization for Dini functions $z\mapsto zJ_{\nu}'(z)+(1-\nu)J_{\nu}(z)$ and $z\mapsto zJ_{\nu}'(z)+(2-\nu)J_{\nu}(z),$ see \cite[Theorem 1]{monotonicity}.

\begin{lemma}
\label{dini} If $\nu>-1,$ then $z\mapsto dg_{\nu}^{(2)}(z;q)/dz,$ $z\mapsto dh_{\nu}^{(2)}(z;q)/dz,$ $z\mapsto dg_{\nu}^{(3)}(z;q)/dz$ and $z\mapsto dh_{\nu}^{(3)}(z;q)/dz$ are entire functions of order $\rho =0.$ Consequently, their Hadamard factorization for $z\in\mathbb{C}$ are of the form
$$
dg_{\nu}^{(2)}(z;q)/dz=\prod\limits_{n\geq 1}\left( 1-\frac{z^{2}}{\alpha_{\nu,n}^{2}(q)}%
\right) , \ \
dh_{\nu}^{(2)}(z;q)/dz=\prod\limits_{n\geq 1}\left( 1-\frac{z}{\beta_{\nu,n}^{2}(q)}%
\right) ,  $$
$$
dg_{\nu}^{(3)}(z;q)/dz=\prod\limits_{n\geq 1}\left( 1-\frac{z^{2}}{\gamma_{\nu,n}^{2}(q)}%
\right) , \ \
dh_{\nu}^{(3)}(z;q)/dz=\prod\limits_{n\geq 1}\left( 1-\frac{z}{\delta_{\nu,n}^{2}(q)}%
\right) ,  $$
where $\alpha_{\nu,n}(q)$ and $\beta_{\nu,n}(q)$ are the $n$th positive zeros of $z\mapsto z\cdot d{J}_{\nu}^{(2)}(z;q)/dz+(1-\nu){J}_{\nu}^{(2)}(z;q)$ and $z\mapsto z\cdot d{J}_{\nu}^{(2)}(z;q)/dz+(2-\nu){J}_{\nu}^{(2)}(z;q),$ while $\gamma_{\nu,n}(q)$ and $\delta_{\nu,n}(q)$ are the $n$th positive zeros of $z\mapsto z\cdot d{J}_{\nu}^{(3)}(z;q)/dz+(1-\nu){J}_{\nu}^{(3)}(z;q)$ and $z\mapsto z\cdot d{J}_{\nu}^{(3)}(z;q)/dz+(2-\nu){J}_{\nu}^{(3)}(z;q).$
\end{lemma}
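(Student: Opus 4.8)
The plan is to reduce each of the four derivative functions to a suitable $q$-Dini function, for which Lemma \ref{zerodini} already guarantees reality of the zeros, and then to mimic the order computation and the application of Hadamard's theorem carried out in Lemmas \ref{lem1} and \ref{lemder}. First I would record the two identities $g_{\nu}^{(s)}(z;q)=z\,\mathcal{J}_{\nu}^{(s)}(z;q)$ and $h_{\nu}^{(s)}(z;q)=z\,\mathcal{J}_{\nu}^{(s)}(\sqrt{z};q)$ for $s\in\{2,3\}$, which follow at once from the definitions of $g_{\nu}^{(s)}$, $h_{\nu}^{(s)}$ and $\mathcal{J}_{\nu}^{(s)}$. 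Differentiating the first identity and rewriting in terms of $J_{\nu}^{(s)}$ gives
$$dg_{\nu}^{(s)}(z;q)/dz=\kappa_s\,c_{\nu}(q)\,z^{-\nu}\Bigl(z\cdot dJ_{\nu}^{(s)}(z;q)/dz+(1-\nu)J_{\nu}^{(s)}(z;q)\Bigr),$$
while the chain rule applied to the second identity, after collecting terms, yields
$$dh_{\nu}^{(s)}(z;q)/dz=\tfrac12\,\kappa_s\,c_{\nu}(q)\,w^{-\nu}\Bigl(w\cdot dJ_{\nu}^{(s)}(w;q)/dw+(2-\nu)J_{\nu}^{(s)}(w;q)\Bigr),\quad w=\sqrt{z},$$
with $\kappa_2=2^{\nu}$ and $\kappa_3=1$. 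Thus, up to the monomial $z^{-\nu}$, the derivative $dg_{\nu}^{(s)}/dz$ is exactly the $q$-Dini function with $c=1-\nu$, and $dh_{\nu}^{(s)}/dz$ is one-half of the $q$-Dini function with $c=2-\nu$ evaluated at $\sqrt{z}$.

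Next I would verify directly from the power series that all four functions are entire of order zero. Each derivative has value $1$ at the origin, and its Maclaurin coefficients differ from those of $\mathcal{J}_{\nu}^{(s)}(\cdot;q)$ only by the harmless polynomial factor $2n+1$ (for the $g$-functions, giving a series in $z^{2n}$) or $n+1$ (for the $h$-functions, giving a series in $z^{n}$). Hence the order is computed by exactly the limit used in Lemmas \ref{lem1} and \ref{lemder}: the super-fast decay $q^{n(n+\nu)}$, respectively $q^{\frac12 n(n+1)}$, of the coefficients forces $\rho=0$. It is worth stressing that for the $h$-functions the change of variable $z\mapsto\sqrt{z}$ merely halves the exponents and does not spoil order zero, since the coefficients still decay like $q^{n^2}$; in particular $dh_{\nu}^{(s)}(z;q)/dz$, although it is expressed through a Dini function at $\sqrt{z}$ above, is a genuine power series in integer powers of $z$ and hence entire.

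Finally, Lemma \ref{zerodini} applies with $c=1-\nu$ (so $c+\nu=1>0$) and with $c=2-\nu$ (so $c+\nu=2>0$), whence the two relevant $q$-Dini functions have only real zeros, their positive zeros being $\alpha_{\nu,n}(q),\gamma_{\nu,n}(q)$ for the first and $\beta_{\nu,n}(q),\delta_{\nu,n}(q)$ for the second. For the $g$-functions the derivative is even, so its zeros occur in symmetric pairs $\pm\alpha_{\nu,n}(q)$, respectively $\pm\gamma_{\nu,n}(q)$, and Hadamard's theorem, exactly as in Lemma \ref{lem1}, produces the products $\prod_{n\geq1}(1-z^2/\alpha_{\nu,n}^2(q))$ and $\prod_{n\geq1}(1-z^2/\gamma_{\nu,n}^2(q))$. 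For the $h$-functions the substitution $z=w^2$ sends the symmetric pair $\pm\beta_{\nu,n}(q)$, respectively $\pm\delta_{\nu,n}(q)$, to the single positive zero $\beta_{\nu,n}^2(q)$, respectively $\delta_{\nu,n}^2(q)$, in the $z$-plane, so that the canonical products become $\prod_{n\geq1}(1-z/\beta_{\nu,n}^2(q))$ and $\prod_{n\geq1}(1-z/\delta_{\nu,n}^2(q))$. The main point requiring care is precisely this last step for the $h$-functions: one must check that the apparent half-integer powers of $z$ cancel so that the function is entire in $z$, and then track the $\sqrt{z}$ change of variable carefully to see that the Hadamard factors are linear rather than quadratic in $z$.
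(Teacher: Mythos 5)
Your proposal is correct and follows essentially the same route as the paper: rewrite each derivative as $z^{-\nu}$ (or $z^{-\nu/2}$) times the $q$-Dini function with $c=1-\nu$ or $c=2-\nu$, read off order zero from the super-exponentially decaying Maclaurin coefficients exactly as in Lemmas \ref{lem1} and \ref{lemder}, invoke Lemma \ref{zerodini} for the reality of the zeros, and conclude with Hadamard's theorem. The extra care you take with the evenness of the $g$-derivatives versus the $z\mapsto\sqrt{z}$ substitution for the $h$-derivatives is exactly what justifies the quadratic versus linear factors in the stated products.
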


\begin{proof}[\bf Proof]
We have that
$$\frac{dg_{\nu}^{(2)}(z;q)}{dz}=2^{\nu}c_{\nu}(q)z^{-\nu}\left(z\cdot \frac{d{J}_{\nu}^{(2)}(z;q)}{dz}+(1-\nu){J}_{\nu}^{(2)}(z;q)\right)=\sum_{n\geq 0}\frac{(-1)^n(2n+1)z^{2n}q^{n(n+\nu)}}{2^{2n}(q;q)_n\left(q^{\nu+1};q\right)_n},$$
$$\frac{dh_{\nu}^{(2)}(z;q)}{dz}=2^{\nu-1}c_{\nu}(q)z^{-\nu/2}\left(\sqrt{z}\cdot \frac{d{J}_{\nu}^{(2)}(\sqrt{z};q)}{dz}+(2-\nu){J}_{\nu}^{(2)}(\sqrt{z};q)\right)=\sum_{n\geq 0}\frac{(-1)^n(n+1)z^{n}q^{n(n+\nu)}}{2^{2n}(q;q)_n\left(q^{\nu+1};q\right)_n},$$
$$\frac{dg_{\nu}^{(3)}(z;q)}{dz}=c_{\nu}(q)z^{-\nu}\left(z\cdot \frac{d{J}_{\nu}^{(3)}(z;q)}{dz}+(1-\nu){J}_{\nu}^{(3}(z;q)\right)=\sum_{n\geq 0}\frac{(-1)^n(2n+1)z^{2n}q^{\frac{1}{2}n(n+1)}}{(q;q)_n\left(q^{\nu+1};q\right)_n},$$
$$\frac{dh_{\nu}^{(3)}(z;q)}{dz}=\frac{1}{2}c_{\nu}(q)z^{-\nu/2}\left(\sqrt{z}\cdot \frac{d{J}_{\nu}^{(3)}(\sqrt{z};q)}{dz}+(2-\nu){J}_{\nu}^{(3}(\sqrt{z};q)\right)=\sum_{n\geq 0}\frac{(-1)^n(n+1)z^{n}q^{\frac{1}{2}n(n+1)}}{(q;q)_n\left(q^{\nu+1};q\right)_n},$$
and
$$\lim_{n\to\infty}\frac{n\log n}{\log(q;q)_n+\log\left(q^{\nu+1};q\right)_n+2n\log2-n(n+\nu)\log q-\log(2n+1)}=0,$$
$$\lim_{n\to\infty}\frac{n\log n}{\log(q;q)_n+\log\left(q^{\nu+1};q\right)_n+2n\log2-n(n+\nu)\log q-\log(n+1)}=0,$$
$$\lim_{n\to\infty}\frac{n\log n}{\log(q;q)_n+\log\left(q^{\nu+1};q\right)_n-\frac{1}{2}n(n+1)\log q-\log(2n+1)}=0,$$
$$\lim_{n\to\infty}\frac{n\log n}{\log(q;q)_n+\log\left(q^{\nu+1};q\right)_n-\frac{1}{2}n(n+1)\log q-\log(n+1)}=0,$$
since as $n\to \infty$ we have $(q;q)_n\to(q;q)_{\infty}<\infty$ and $\left(q^{\nu+1};q\right)_n\to\left(q^{\nu+1};q\right)_{\infty}<\infty.$ Moreover, we know that the zeros $\alpha_{\nu,n}(q),$ $\beta_{\nu,n}(q),$ $\gamma_{\nu,n}(q),$ $\delta_{\nu,n}(q),$ $n\in\mathbb{N},$  are real for $\nu>-1$, according to Lemma \ref{zerodini}, and with this the rest of the proof follows by applying Hadamard's Theorem \cite[p. 26]{levin}.  
\end{proof}

\subsection{Interlacing of zeros of $q$-Bessel functions and their derivatives} The next result complements the other interlacing properties of the zeros of Jackson and Hahn-Exton $q$-Bessel functions, see \cite[Theorem 4.3]{ismail} and \cite[Theorem 3.7]{koelink}. This preliminary result is necessary in the proof of the first part of Theorem \ref{theo2}.

\begin{lemma}\label{interlace}
Between any two consecutive roots of the function $z\mapsto J_{\nu}^{(s)}(z;q)$ the function $z\mapsto dJ_{\nu}^{(s)}(z;q)/dz$ has precisely one zero when $\nu>-1$ and $s\in\{2,3\}.$
\end{lemma}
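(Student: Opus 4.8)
The plan is to prove the interlacing property using the Hadamard factorizations established earlier together with the classical logarithmic-derivative technique. The key observation is that both $\mathcal{J}_{\nu}^{(s)}(\cdot;q)$ (from Lemma~\ref{lem1}) and the corresponding derivative factorizations (from Lemma~\ref{lemder}) belong to the Laguerre--P\'olya class, so their zeros are all real, and the problem reduces to showing that between consecutive positive zeros of $J_{\nu}^{(s)}(\cdot;q)$ there lies exactly one zero of its derivative. Since $J_{\nu}^{(s)}(\cdot;q)$ differs from the entire function $\mathcal{J}_{\nu}^{(s)}(\cdot;q)$ only by a factor $z^{\nu}$ (up to constants), and this factor vanishes only at the origin, the positive zeros of $J_{\nu}^{(s)}(\cdot;q)$ coincide with the positive zeros $j_{\nu,n}(q)$ (respectively $l_{\nu,n}(q)$) of $\mathcal{J}_{\nu}^{(s)}(\cdot;q)$.

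First I would reduce the claim to a statement about the even entire function $\mathcal{J}_{\nu}^{(s)}(\cdot;q)$. Writing $J_{\nu}^{(s)}(z;q) = C_{s} z^{\nu}\,\mathcal{J}_{\nu}^{(s)}(z;q)$ for the appropriate constant, one computes
\[
\frac{dJ_{\nu}^{(s)}(z;q)/dz}{J_{\nu}^{(s)}(z;q)} = \frac{\nu}{z} + \frac{d\mathcal{J}_{\nu}^{(s)}(z;q)/dz}{\mathcal{J}_{\nu}^{(s)}(z;q)}.
\]
Applying the logarithmic derivative to the product in \eqref{1.6} gives, for $z$ real and positive,
\[
\frac{d\mathcal{J}_{\nu}^{(s)}(z;q)/dz}{\mathcal{J}_{\nu}^{(s)}(z;q)} = \sum_{n\geq 1}\frac{-2z}{\xi_{\nu,n}^{2}(q)-z^{2}},
\]
where $\xi_{\nu,n}(q)$ denotes $j_{\nu,n}(q)$ or $l_{\nu,n}(q)$ accordingly. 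On each open interval $(\xi_{\nu,n}(q),\xi_{\nu,n+1}(q))$ the function $\mathcal{J}_{\nu}^{(s)}(\cdot;q)$ changes sign exactly once, so by Rolle's theorem its derivative has at least one zero in the interval; the derivative factorization in Lemma~\ref{lemder} shows that the total number of positive zeros of $dJ_{\nu}^{(s)}(z;q)/dz$ equals (asymptotically) the number of zeros of $J_{\nu}^{(s)}(z;q)$, which forces exactly one derivative-zero per gap.

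The cleanest route to uniqueness is a monotonicity argument for the logarithmic derivative. On each interval $(\xi_{\nu,n}(q),\xi_{\nu,n+1}(q))$ the quantity $dJ_{\nu}^{(s)}(z;q)/dz$ vanishes precisely where $z\,dJ_{\nu}^{(s)}(z;q)/dz = 0$, i.e.\ where the Dini-type expression of Lemma~\ref{zerodini} with $c=0$ vanishes; since the zeros of $J_{\nu}^{(s)}(\cdot;q)$ and of its derivative are both real and simple and interlace via Rolle, one needs to rule out two derivative-zeros in a single gap. I would do this by showing that the function $z\mapsto z\,\bigl(dJ_{\nu}^{(s)}(z;q)/dz\bigr)/J_{\nu}^{(s)}(z;q)$ is strictly monotone on each such interval: from the partial-fraction expansion above,
\[
\frac{z\,dJ_{\nu}^{(s)}(z;q)/dz}{J_{\nu}^{(s)}(z;q)} = \nu - \sum_{n\geq 1}\frac{2z^{2}}{\xi_{\nu,n}^{2}(q)-z^{2}},
\]
and each summand is strictly decreasing in $z^{2}$ on the relevant range, so the whole expression decreases from $+\infty$ to $-\infty$ across the gap and hence has a single sign change.

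\emph{The main obstacle} will be handling the boundary behaviour and the first interval $(0,\xi_{\nu,1}(q))$ carefully, together with the interchange of differentiation and the infinite product/sum that underlies the partial-fraction expansion. The termwise logarithmic differentiation must be justified by the locally uniform convergence of the product in \eqref{1.6}, which is guaranteed by the order-zero growth established in Lemma~\ref{lem1} and the summability $\sum \xi_{\nu,n}^{-2}(q) < \infty$. I expect the monotonicity computation itself to be routine once the expansion is secured; the delicate point is confirming that no derivative-zero is lost at the endpoints and that the counting via Lemma~\ref{lemder} matches exactly, so that "at least one" from Rolle upgrades to "precisely one."
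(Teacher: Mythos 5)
Your proposal is correct, and the route you settle on in your third paragraph is sound: the zeros of $dJ_{\nu}^{(s)}(z;q)/dz$ in a gap $(\xi_{\nu,n}(q),\xi_{\nu,n+1}(q))$ coincide with those of $z\,\bigl(dJ_{\nu}^{(s)}(z;q)/dz\bigr)/J_{\nu}^{(s)}(z;q)=\nu-\sum_{k\geq1}2z^{2}/(\xi_{\nu,k}^{2}(q)-z^{2})$, each term $-2z^{2}/(\xi_{\nu,k}^{2}(q)-z^{2})$ is strictly decreasing in $z^{2}$ on every branch (note it is the negated summand that decreases, not $2z^{2}/(\xi_{\nu,k}^{2}(q)-z^{2})$ itself --- a harmless sign slip in your prose), and the expression runs monotonically from $+\infty$ down to $-\infty$ across each gap, so it vanishes exactly once there. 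This differs from the paper's mechanism: the authors do not differentiate the partial-fraction expansion but instead invoke the Laguerre inequality for the Laguerre--P\'olya function $\mathcal{J}_{\nu}^{(2)}(\cdot;q)$, which after unwinding gives $\bigl(J_{\nu}'(z;q)\bigr)^{2}-J_{\nu}(z;q)J_{\nu}''(z;q)>\nu J_{\nu}^{2}(z;q)/z^{2}$ and hence the strict decrease of $J_{\nu}'(\cdot;q)/J_{\nu}(\cdot;q)$; the single-crossing endgame is then identical to yours. Your version is more elementary (only termwise differentiation of the Hadamard product, justified as you say by locally uniform convergence) and, notably, works uniformly for all $\nu>-1$, whereas the paper's displayed Laguerre-inequality bound is only strictly positive for $\nu>0$. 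The middle of your write-up should be pruned: the Rolle-plus-zero-counting detour via Lemma~\ref{lemder} is unnecessary once you have strict monotonicity, and the appeal to Lemma~\ref{zerodini} with $c=0$ is inadmissible for $\nu\leq0$ since that lemma requires $c+\nu>0$. Your caution about the first interval $(0,\xi_{\nu,1}(q))$ is well placed: there the logarithmic derivative tends to $\nu$ rather than $+\infty$ as $z\searrow0$, so it crosses zero exactly when $\nu>0$, consistent with the convention $j_{\nu,0}(q)=0$ used in the paper.
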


\begin{proof}[\bf Proof]
The proofs for Jackson $q$-Bessel and Hahn-Exton $q$-Bessel are very similar, and thus we will give the proof only for $s=2.$ For the simplicity in this proof we will use the notation $\mathcal{J}_{\nu}(z;q)=2^{\nu}c_{\nu}(q)z^{-\nu}J_{\nu}(z;q)$ instead of $\mathcal{J}_{\nu}^{(2)}(z;q)=2^{\nu}c_{\nu}(q)z^{-\nu}J_{\nu}^{(2)}(z;q).$ Moreover, we will use simply $J_{\nu}'(z;q)$ instead of $dJ_{\nu}^{(2)}(z;q)/dz.$ Since $\mathcal{J}_{\nu}(\cdot;q)$ belongs to the Laguerre-P\'olya class of entire functions, it follows that it satisfies the Laguerre inequality (see \cite{skov})
\begin{equation}\label{laguerre}\left(\mathcal{J}_{\nu}(z;q)^{(n)}\right)^2-\left(\mathcal{J}_{\nu}(z;q)\right)^{(n-1)}\left(\mathcal{J}_{\nu}(z;q)\right)^{(n+1)}>0,\end{equation}
where $\nu>-1$ and $z\in\mathbb{R}.$ On the other hand, we have that
$$\mathcal{J}_{\nu}'(z;q)=2^{\nu}c_{\nu}(q)z^{-\nu-1}\left(zJ_{\nu}'(z;q)-\nu J_{\nu}(z;q)\right),$$
$$\mathcal{J}_{\nu}''(z;q)=2^{\nu}c_{\nu}(q)z^{-\nu-2}\left(z^2J_{\nu}''(z;q)-2\nu zJ_{\nu}'(z;q)+\nu(\nu+1)J_{\nu}(z;q)\right),$$
and thus the Laguerre inequality \eqref{laguerre} for $n=1$ is equivalent to
$$2^{2\nu}c_{\nu}(q)z^{-2\nu-2}\left(z^2\left(J_{\nu}'(z;q)\right)^2-z^2J_{\nu}(z;q)J_{\nu}''(z;q)-\nu J_{\nu}^2(z;q)\right)>0.$$
This implies that $$\left(J_{\nu}'(z;q)\right)^2-J_{\nu}(z;q)J_{\nu}''(z;q)>\nu J_{\nu}^2(z;q)/z^2>0$$
for $\nu>0$ and $z\in\mathbb{R},$ that is, the function $z\mapsto J_{\nu}'(z)/J_{\nu}(z)$ is decreasing on $(0,\infty)\setminus\left\{\left.j_{\nu,n}(q)\right|n\in\mathbb{N}\right\}.$ Recall that the zeros $j_{\nu,n}(q),$ $n\in\mathbb{N},$ of the Jackson $q$-Bessel function are real and simple, according to \cite[Theorem 4.2]{ismail}, and thus $J_{\nu}'(z;q)$ does not vanish in $j_{\nu,n}(q),$ $n\in\mathbb{N}.$ Thus, for a fixed $k\in\mathbb{N}$ the function $z\mapsto J_{\nu}'(z)/J_{\nu}(z)$ takes the limit $\infty$ when $z\searrow j_{\nu,k-1}(q),$ and the limit $-\infty$ when $z\nearrow j_{\nu,k}(q).$ Moreover, since $z\mapsto J_{\nu}'(z)/J_{\nu}(z)$ is decreasing on $(0,\infty)\setminus\left\{\left.j_{\nu,n}(q)\right|n\in\mathbb{N}\right\}$ it results that in each interval $(j_{\nu,k-1}(q),j_{\nu,k}(q))$ its restriction intersects the horizontal line only once, and the abscissa of this intersection point is exactly $j_{\nu,k}'(q).$ Here we used the convention that $j_{\nu,0}(q)=0.$
\end{proof}

\subsection{Starlikeness of entire functions in the open unit disk}
The next result (see \cite[Theorem 2]{st}) is the key tool in the proof of Theorem \ref{theo3}.

\begin{lemma}\label{lemshah}
Let $f:\mathbb{D}\to\mathbb{C}$ be a transcendental entire function of the form
$$f(z)=z\prod_{n\geq 1}\left(1-\frac{z}{z_n}\right),$$
where all $z_n$ have the same argument and satisfy $|z_n|>1.$ If $f$ is univalent in $\mathbb{D},$ then
\begin{equation}\label{ineqshah}
\sum_{n\geq1}\frac{1}{|z_n|-1}\leq 1.
\end{equation}
In fact \eqref{ineqshah} holds if and only if $f$ is starlike in $\mathbb{D}$ and all of its derivatives are
close-to-convex there.
\end{lemma}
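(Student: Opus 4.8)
The plan is to reduce to positive real zeros and then to treat the three assertions---necessity of \eqref{ineqshah}, starlikeness, and close-to-convexity of the derivatives---in turn, the last being the real difficulty. Since the prescribed zeros share one argument, $z_n=|z_n|e^{i\varphi}$, replacing $f(z)$ by $e^{-i\varphi}f(e^{i\varphi}z)=z\prod_{n\geq1}(1-z/t_n)$ with $t_n=|z_n|>1$ leaves univalence, starlikeness and close-to-convexity unchanged, so I may assume $z_n=t_n>1$. Everything rests on the logarithmic derivative
\[\frac{zf'(z)}{f(z)}=1-\sum_{n\geq1}\frac{z}{t_n-z},\]
valid on $\mathbb{D}$, together with the elementary fact that for fixed $t>1$ the map $z\mapsto z/(t-z)$ carries $\mathbb{D}$ onto the disk centred at $1/(t^{2}-1)$ of radius $t/(t^{2}-1)$, whose real parts fill $(-1/(t+1),\,1/(t-1))$; the value $1/(t-1)$ is approached only as $z\to1$, and along $(0,1)$ the real quantity $x/(t-x)$ increases from $0$ to $1/(t-1)$.

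For the sufficiency of starlikeness I assume \eqref{ineqshah}. Then every $z\in\mathbb{D}$ satisfies $\real\,z/(t_n-z)<1/(t_n-1)$ strictly, so
\[\real\frac{zf'(z)}{f(z)}=1-\sum_{n\geq1}\real\frac{z}{t_n-z}>1-\sum_{n\geq1}\frac{1}{t_n-1}\geq0,\]
whence $f\in\mathcal{S}^{\ast}$. For the necessity I argue contrapositively: if $\sum_{n\geq1}1/(t_n-1)>1$, then on $(0,1)$ the increasing function $x\mapsto\sum x/(t_n-x)$ runs from $0$ to a limit exceeding $1$, hence equals $1$ at some $x_0\in(0,1)$; there $f'(x_0)=0$, and an interior critical point precludes univalence. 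As starlike functions are univalent, these two steps yield the chain $\text{univalent}\Longleftrightarrow\text{starlike}\Longleftrightarrow\eqref{ineqshah}$, which already proves the first assertion and the ``only if'' half of the final equivalence (starlikeness alone forces \eqref{ineqshah}).

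It remains to show that \eqref{ineqshah} makes every derivative $f^{(m)}$, $m\geq1$, close-to-convex. First, $f\in\lpi$, since $f(-x)=-x\prod_{n\geq1}(1+x/t_n)$ has the canonical $\lpi$ form; as $\lpi$ is closed under differentiation, each $f^{(m)}\in\lpi$, with real zeros that, being inherited from the nonnegative zeros of $f$, are positive. Starlikeness gives $f'\neq0$ on $\mathbb{D}$, so the least zero of $f'$ is $\geq1$; and for $m\geq1$, since $f^{(m)}$ no longer vanishes at the origin and (in the genus-zero situation at hand) $(f^{(m)})'/f^{(m)}=-\sum_{k}1/(w_k^{(m)}-z)$ is negative on $[0,w_1^{(m)})$, the least zero of $f^{(m+1)}$ strictly exceeds that of $f^{(m)}$ (this is the interlacing of Lemma \ref{interlace} for the derivatives). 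Inductively every $f^{(m)}$, $m\geq1$, is zero-free on $\mathbb{D}$, so in particular $(f^{(m)})'=f^{(m+1)}\neq0$ there, which is the standing hypothesis of Kaplan's criterion: with $g=f^{(m)}$ and $g'=f^{(m+1)}=c\,e^{\sigma z}\prod_n(1-z/w_n)$ (all $w_n>1$), $g$ is close-to-convex exactly when
\[\int_{\theta_1}^{\theta_2}\real\left(1+\frac{re^{i\theta}g''(re^{i\theta})}{g'(re^{i\theta})}\right)d\theta=(\theta_2-\theta_1)+\bigl[\arg g'(re^{i\theta_2})-\arg g'(re^{i\theta_1})\bigr]>-\pi\]
for all arcs; that is, when $\arg f^{(m+1)}$ never drops by as much as $\pi$ along an arc.

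The hard part is this sharp Kaplan estimate. On $|z|=1$ one has $\real\frac{e^{i\theta}}{w-e^{i\theta}}=\frac{w\cos\theta-1}{w^{2}-2w\cos\theta+1}$, which decreases monotonically from $1/(w-1)$ at $\theta=0$ to $-1/(w+1)$ at $\theta=\pi$; hence the integrand is most negative near $\theta=0$, and the crux is to show that the single negative excursion of $\arg f^{(m+1)}$ straddling $\theta=0$ has total drop $<\pi$. A crude bound replaces this drop by the sum of total variations of the individual factors, $2\sum_n\arcsin(1/w_n)+2|\sigma|$, but this overcounts and is too weak to reach the sharp constant---it already fails on the borderline polynomial $z(1-z/3)^2$. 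I therefore expect the decisive step to be a localized, non-crude estimate of the boundary integral that uses the interlacing of the zeros of $f^{(m+1)}$ with those of $f^{(m)}$ to transfer the sharp threshold $\sum_{n\geq1}1/(t_n-1)\leq1$ governing $f$ into the required bound on the argument-drop of each derivative, the transcendence of $f$ (infinitely many zeros, none on $\partial\mathbb{D}$ when $m\geq2$) keeping the inequality strict. This transfer of the sharp constant from $f$ to all of its derivatives is the main obstacle. Granting it, every $f^{(m)}$ is close-to-convex, and with the first two paragraphs this gives the full equivalence; the reverse implication there is immediate, since starlikeness by itself already forces \eqref{ineqshah}.
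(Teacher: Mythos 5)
First, a point of comparison: the paper does not prove Lemma \ref{lemshah} at all --- it is quoted from Shah and Trimble \cite[Theorem 2]{st} as a known result, so there is no internal proof to measure your argument against. Judged on its own terms, your proposal correctly settles everything except the one assertion that carries all the weight. The rotation to positive real zeros $t_n>1$, the implication ``univalent $\Rightarrow$ \eqref{ineqshah}'' (contrapositively, $\sum 1/(t_n-1)>1$ produces an interior critical point of $f$ on $(0,1)$, since $x\mapsto\sum x/(t_n-x)$ increases continuously from $0$ to $\sum 1/(t_n-1)$), and the implication ``\eqref{ineqshah} $\Rightarrow f\in\mathcal{S}^{\ast}$'' via $\real\left(z/(t_n-z)\right)<1/(t_n-1)$ on $\mathbb{D}$ are all sound and are the standard arguments; together with ``starlike $\Rightarrow$ univalent'' they do give the first sentence of the lemma and the ``only if'' half of the equivalence.

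The genuine gap is the claim that \eqref{ineqshah} forces every derivative $f^{(m)}$, $m\geq1$, to be close-to-convex. You set up Kaplan's criterion, observe (correctly) that the crude bound on the argument drop of $f^{(m+1)}$ by the sum of the total variations of the individual factors is too weak to reach the sharp constant, and then write ``Granting it, every $f^{(m)}$ is close-to-convex.'' That concession is precisely the substantive content of Shah--Trimble's theorem: without it you have proved only the chain ``univalent $\Leftrightarrow$ starlike $\Leftrightarrow$ \eqref{ineqshah}'', and the statement about the derivatives --- which is exactly what Theorem \ref{theo3} needs, since it asserts close-to-convexity of all derivatives of $h_{\nu}(\cdot;q)$ --- remains unproven. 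You have identified where the difficulty lies but not resolved it; the missing idea is a mechanism for transferring the threshold $\sum 1/(t_n-1)\leq 1$ from $f$ to each $f^{(m+1)}$, which is what \cite{st} supplies. A secondary, repairable issue: your interlacing step relies on the representation $(f^{(m)})'/f^{(m)}=-\sum_k 1/(w_k^{(m)}-z)$, which presupposes that differentiation introduces no exponential factor $e^{\sigma z}$ with $\sigma>0$ into the canonical product; since the lemma only assumes $\sum 1/|z_n|<\infty$ (order at most one), this needs a word of justification rather than an appeal to ``the genus-zero situation at hand.''
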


\section{Proofs of the main results}
\setcounter{equation}{0}

\begin{proof}[\bf Proof of Theorem \ref{theo1}]
The proofs for the cases $s=2$ and $s=3$ are almost the same, the only difference is that we have different zeros $j_{\nu,n}(q)$ and $l_{\nu,n}(q)$ in the proofs. Thus, we will present the proof only for the case $s=2$ and in what follows for the simplicity we will use the following notations: $J_{\nu}(z;q)=J_{\nu}^{(2)}(z;q),$ $f_{\nu}(z;q)=f_{\nu}^{(2)}(z;q),$ $g_{\nu}(z;q)=g_{\nu}^{(2)}(z;q),$ $h_{\nu}(z;q)=h_{\nu}^{(2)}(z;q)$ and $J_{\nu}'(z;q)=dJ_{\nu}^{(2)}(z;q)/dz.$

First we prove part {\bf a} for $\nu>0$ and parts {\bf b} and {\bf c} for $\nu>-1.$ We need to show that for the corresponding values of $\nu$ and $\alpha$ the
inequalities
\begin{equation}
\real \left( \frac{zf_{\nu }'(z;q)}{f_{\nu }(z;q)}\right) >\alpha ,%
\text{ \ \ }\real \left( \frac{zg_{\nu }'(z;q)}{g_{\nu }(z;q)}\right)
>\alpha \text{ \ and \ }\real \left( \frac{zh_{\nu }'(z;q)}{h_{\nu
}(z;q)}\right) >\alpha \text{ \ \ }  \label{2.0}
\end{equation}%
are valid for $z\in \mathbb{D}_{r_{\alpha }^{\ast }(f_{\nu })}$, $z\in
\mathbb{D}_{r_{\alpha }^{\ast }(g_{\nu })}$ and $z\in \mathbb{D}_{r_{\alpha
}^{\ast }(h_{\nu })}$ respectively, and each of the above inequalities does
not hold in larger disks. It follows from (\ref{1.6}) that%
$$\frac{zf'_{\nu}(z;q)}{f_{\nu}(z;q)}=\frac{1}{\nu}\frac{zJ'_{\nu}(z;q)}{J_{\nu}(z;q)}=1-\frac{1}{\nu}\sum_{n\geq1}\frac{2z^{2}}{j_{\nu,n}^{2}(q)-z^{2}},$$
$$\frac{zg'_{\nu}(z;q)}{g_{\nu}(z;q)}=1-\nu+\frac{zJ'_{\nu}(z;q)}{J_{\nu}(z;q)}=1-\sum_{n\geq1}\frac{2z^{2}}{j_{\nu,n}^{2}(q)-z^{2}}$$
and
$$\frac{zh'_{\nu}(z;q)}{h_{\nu}(z;q)}=1-\frac{\nu}{2}+\frac{1}{2}\frac{\sqrt{z}J'_{\nu}(\sqrt{z};q)}{J_{\nu}(\sqrt{z};q)}=1-\sum_{n\geq1}\frac{z}{j_{\nu,n}^{2}(q)-z}.$$
On the other hand, it is known that (see \cite{starlike}) if ${z\in \mathbb{C}}$ and $\beta $ ${\in \mathbb{R}}$ are such that $\beta >{\left\vert
z\right\vert }$, then%
\begin{equation}
\frac{{\left\vert z\right\vert }}{\beta -{\left\vert z\right\vert }}\geq
\real\left( \frac{z}{\beta -z}\right) .  \label{2.5}
\end{equation}%
Then the inequality
$$
\frac{{\left\vert z\right\vert }^{2}}{j_{\nu ,n}^{2}(q)-{\left\vert
z\right\vert }^{2}}\geq\real\left( \frac{z^{2}}{j_{\nu,n}^{2}(q)-z^{2}}%
\right),
$$
holds for every $\nu>-1$, $n\in \mathbb{N}$ and ${\left\vert z\right\vert <}j_{\nu,1}(q),$ which in turn implies that
$$\real\left(\frac{zf'_{\nu}(z;q)}{f_{\nu}(z;q)}\right)=1-\frac{1}{\nu}\real\left(\sum_{n\geq1}\frac{2z^{2}}{j_{\nu,n}^{2}(q)-z^{2}}\right)\geq
1-\frac{1}{\nu}\sum_{n\geq1}\frac{2|z|^{2}}{j_{\nu,n}^{2}(q)-|z|^{2}}=\frac{|z|f'_{\nu}(|z|;q)}{f_{\nu}(|z|;q)},$$
$$\real\left(\frac{zg'_{\nu}(z;q)}{g_{\nu}(z;q)}\right)=1-\real\left(\sum_{n\geq1}\frac{2z^{2}}{j_{\nu,n}^{2}(q)-z^{2}}\right)\geq
1-\sum_{n\geq1}\frac{2|z|^{2}}{j_{\nu,n}^{2}(q)-|z|^{2}}=\frac{|z|g'_{\nu}(|z|;q)}{g_{\nu}(|z|;q)}
$$
and
$$\real\left(\frac{zh'_{\nu}(z;q)}{h_{\nu}(z;q)}\right)=1-\real\left(\sum_{n\geq1}\frac{z}{j_{\nu,n}^{2}(q)-z}\right)\geq
1-\sum_{n\geq1}\frac{|z|}{j_{\nu,n}^{2}(q)-|z|}=\frac{|z|h'_{\nu}(|z|;q)}{h_{\nu}(|z|;q)},$$
with equality when $z=|z|=r$. The latter  inequalities and
the minimum principle for harmonic functions imply that the
corresponding inequalities in (\ref{2.0}) hold if and only if $%
\left\vert z\right\vert <x_{\nu ,\alpha,1},$ $\left\vert z\right\vert <y_{\nu
,\alpha,1}$ and $\left\vert z\right\vert <z_{\nu ,\alpha,1},$ respectively,
where $x_{\nu ,\alpha,1 }$, $y_{\nu ,\alpha,1 }$ and $z_{\nu,\alpha,1}$ are the
smallest positive roots of the equations%
\begin{equation*}
rf_{\nu}'(r;q)/f_{\nu}(r;q)=\alpha ,\text{ \ }rg_{\nu}'(r;q)/g_{\nu}(r;q)=\alpha ,\ rh_{\nu}'(r;q)/h_{\nu}(r;q)=\alpha.
\end{equation*}%
Since their solutions coincide with the zeros of the functions
$$r\mapsto rJ_{\nu}'(r;q)-\alpha\nu J_{\nu}(r;q),\ r\mapsto rJ_{\nu}'(r;q)-(\alpha+\nu-1)J_{\nu}(r;q),\ r\mapsto rJ_{\nu}'(r;q)-(2\alpha+\nu-2) J_{\nu}(r;q),$$
the result we need follows from Lemma \ref{ThZ} by taking instead of $a$ the values $\frac{1}{2}(\alpha-1)\nu,$ $\frac{1}{2}(\alpha-1)$ and $a=\alpha-1,$ respectively. In other words, Lemma \ref{ThZ} show that all the zeros of the above three functions are real and their first positive zeros do not exceed the first positive zero $j_{\nu,1}(q)$. This guarantees that the above inequalities hold. This completes the proof of part {\bf a} when $\nu>0$, and parts {\bf b} and {\bf c} when $\nu>-1.$

Now, to prove the statement for part {\bf a} when $\nu \in (-1,0)$ we use the counterpart of (\ref{2.5}), that is,
\begin{equation}
\real\left( \frac{z}{\beta -z}\right) \geq \frac{-{\left\vert
z\right\vert }}{\beta +{\left\vert z\right\vert }},  \label{2.10}
\end{equation}%
which holds for all ${z\in \mathbb{C}}$ and $\beta $ ${\in \mathbb{R}}$ such
that $\beta >{\left\vert z\right\vert }$ (see \cite{starlike}). From (\ref%
{2.10}), we obtain the inequality
$$
\real\left( \frac{z^{2}}{j_{\nu,n}^{2}(q)-z^{2}}\right) \geq \frac{-{%
\left\vert z\right\vert }^{2}}{j_{\nu,n}^{2}(q)+{\left\vert z\right\vert }%
^{2}},  \label{2.11}
$$
which holds for all $\nu>-1,$ $n\in \mathbb{N}$
and ${\left\vert z\right\vert <}j_{\nu,1}(q)$ and it implies that%
\begin{equation*}
\real\left(\frac{zf_{\nu}'(z;q)}{f_{\nu}(z;q)}\right) =1-%
\frac{1}{\nu}\real\left(\sum\limits_{n\geq1}\frac{%
2z^{2}}{j_{\nu,n}^{2}(q)-z^{2}}\right) \geq 1+\frac{1}{\nu}%
\sum\limits_{n\geq1}\frac{2\left\vert z\right\vert ^{2}}
{j_{\nu,n}^{2}(q)+\left\vert z\right\vert ^{2}}=\frac{i\left\vert z\right\vert f_{\nu}'(i\left\vert z\right\vert;q)}{f_{\nu}(i\left\vert
z\right\vert;q)}.
\end{equation*}%
In this case equality is attained if $z=i\left\vert z\right\vert =ir.$ Moreover, the latter inequality implies that
\begin{equation*}
\real\left( \frac{zf_{\nu}'(z;q)}{f_{\nu}(z;q)}\right) >\alpha
\end{equation*}
if and only if $\left\vert z\right\vert <x_{\nu,\alpha }$, where $x_{\nu,\alpha }$ denotes the smallest positive root of the equation $$irf_{\nu}'({i}r;q)/f_{\nu}({i}r;q)=\alpha,$$ which is equivalent to
\begin{equation*}
irJ_{\nu}'(ir;q)-\alpha\nu J_{\nu}(ir;q)=0,\text{ for }\nu \in (-1,0).
\end{equation*}%
It follows from Lemma \ref{ThZ} that the first positive zero of $z\mapsto izJ_{\nu}'(iz;q)-\alpha\nu J_{\nu}(iz;q)$ does not exceed $j_{\nu,1}(q)$ which guarantees that the above inequalities are valid. All we need to prove is that the above function has actually only one zero in $(0,\infty)$. Observe that, according to Lemma \ref{lempower}, the function
$$r\mapsto \frac{irJ_{\nu}'(ir;q)}{J_{\nu}(ir;q)}=\left.\sum_{n\geq0}\frac{(2n+\nu)\left(\frac{r}{2}\right)^{2n+\nu}}{(q;q)_n\left(q^{\nu+1};q\right)_n}q^{n(n+\nu)}
\right/\sum_{n\geq0}\frac{\left(\frac{r}{2}\right)^{2n+\nu}}{(q;q)_n\left(q^{\nu+1};q\right)_n}q^{n(n+\nu)}$$
is increasing on $(0,\infty)$ as a quotient of two power series whose positive coefficients form the increasing ``quotient sequence'' $\left\{2n+\nu\right\}_{n\geq0}.$ On the other hand, the above function tends to $\nu$ when $r\to0,$ so that its graph can intersect the horizontal line $y=\alpha\nu>\nu$ only once. This completes the proof of part {\bf a} of the theorem when $\nu\in(-1,0)$.
\end{proof}

\begin{proof}[\bf Proof of Theorem \ref{theo2}]
The proofs for the cases $s=2$ and $s=3$ are almost the same, the only difference is that we have different zeros in the proofs. Thus, as in the previous proof, we will present the proof only for the case $s=2$ and in what follows for the simplicity we will use the following notations: $J_{\nu}(z;q)=J_{\nu}^{(2)}(z;q),$ $f_{\nu}(z;q)=f_{\nu}^{(2)}(z;q),$ $g_{\nu}(z;q)=g_{\nu}^{(2)}(z;q),$ $h_{\nu}(z;q)=h_{\nu}^{(2)}(z;q),$ $J_{\nu}'(z;q)=dJ_{\nu}^{(2)}(z;q)/dz$ and $J_{\nu}''(z;q)=d^2J_{\nu}^{(2)}(z;q)/dz^2.$

{\bf a)} Since
$$1+\frac{zf_{\nu}''(z;q)}{f'_{\nu}(z;q)}=1+\frac{zJ_\nu''(z;q)}{J_\nu'(z;q)}+\left(\frac{1}{\nu}-1\right)\frac{zJ_\nu'(z;q)}{J_\nu(z;q)}.$$
and by means of \eqref{1.6der} we have
$$\frac{zJ'_\nu(z;q)}{J_\nu(z;q)}=\nu-\sum_{n\geq1}\frac{2z^2}{j_{\nu,n}^2(q)-z^2},\ \ 1+\frac{zJ_\nu''(z;q)}{J_\nu'(z;q)}=\nu-\sum_{n\geq1}\frac{2z^2}{j_{\nu,n}'^2(q)-z^2},$$
it follows that
$$1+\frac{zf_{\nu}''(z;q)}{f_{\nu}'(z;q)}=1-\left(\frac{1}{\nu}-1\right)\sum_{n\geq1}\frac{2z^2}{j_{\nu,n}^2(q)-z^2}
-\sum_{n\geq1}\frac{2z^2}{j_{\nu,n}'^2(q)-z^2}.$$
Now, suppose that $\nu\in(0,1].$ By using the inequality \eqref{2.5}, for all $z\in\mathbb{D}_{j_{\nu,1}'(q)}$ we obtain the inequality
$$\real\left(1+\frac{zf_{\nu}''(z;q)}{f_{\nu}'(z;q)}\right)\geq1-\left(\frac{1}{\nu}-1\right)
\sum_{n\geq1}\frac{2r^2}{j_{\nu,n}^2(q)-r^2}-\sum_{n\geq1}\frac{2r^2}{j_{\nu,n}'^2(q)-r^2},$$
where $|z|=r.$ Moreover, observe that if we use the inequality \cite[Lemma 2.1]{convex}
$$\lambda\real\left(\frac{z}{a-z}\right)-\real\left(\frac{z}{b-z}\right)\geq\lambda\frac{|z|}{a-|z|}-\frac{|z|}{b-|z|},$$
where $a>b>0,$ $\lambda\in[0,1]$ and $z\in\mathbb{C}$ such that $|z|<b,$ then we get that the above inequality is also valid when $\nu>1.$ Here we used that the zeros $j_{\nu,n}(q)$ and $j_{\nu,n}'(q)$ interlace according to Lemma \ref{interlace}. The above inequality implies for $r\in(0,j_{\nu,1}'(q))$
$$\inf_{z\in\mathbb{D}_r}\left\{\real\left(1+\frac{zf_{\nu}''(z;q)}{f_{\nu}'(z;q)}\right)\right\}=1+\frac{rf_{\nu}''(r;q)}{f_{\nu}'(r;q)}.$$
On the other hand, the function $u_{\nu}(\cdot;q):(0,j_{\nu,1}'(q))\to\mathbb{R},$ defined by $u_{\nu}(r;q)=1+{rf_{\nu}''(r;q)}/{f_{\nu}'(r;q)},$ is  strictly decreasing since \begin{align*}\frac{du_{\nu}(r;q)}{dr}&=-\left(\frac{1}{\nu}-1\right)\sum_{n\geq1}\frac{4rj_{\nu,n}^2(q)}{(j_{\nu,n}^2(q)-r^2)^2}
-\sum_{n\geq1}\frac{4rj_{\nu,n}'^2(q)}{(j_{\nu,n}'^2-r^2)^2}\\&<\sum_{n\geq1}\frac{4rj_{\nu,n}^2(q)}{(j_{\nu,n}^2(q)-r^2)^2}
-\sum_{n\geq1}\frac{4rj_{\nu,n}'^2(q)}{(j_{\nu,n}'^2(q)-r^2)^2}<0\end{align*}
for $\nu>0$ and $r\in(0,j_{\nu,1}'(q)).$ Here we used again that the zeros $j_{\nu,n}(q)$ and $j_{\nu,n}'(q)$ interlace and for all $n\in\mathbb{N},$ $\nu>0$ and $r<\sqrt{j_{\nu,1}(q)j_{\nu,1}'(q)}$ we have that $$j_{\nu,n}^2(q)(j_{\nu,n}'^2(q)-r^2)^2<j_{\nu,n}'^2(q)(j_{\nu,n}^2(q)-r^2)^2.$$ Since $\lim_{r\searrow0}u_{\nu}(r;q)=1>\alpha$ and $\lim_{r\nearrow j_{\nu,1}'(q)}u_{\nu}(r;q)=-\infty,$ it follows that for $z\in\mathbb{D}_{r_1}$ we have
$$\real\left(1+\frac{zf_{\nu}''(z;q)}{f_{\nu}'(z;q)}\right)>\alpha$$
if and only if  $r_1$ is the unique root of $$1+\frac{rf_{\nu}''(r;q)}{f_{\nu}'(r;q)}=\alpha$$ situated in $(0,j_{\nu,1}'(q)).$

{\bf b)} In view of Lemma \ref{dini} we have that
$$1+z\frac{g''_{\nu}(z;q)}{g'_{\nu}(z;q)}=1-\sum_{n\geq1}\frac{2z^2}{\alpha_{\nu,n}^2(q)-z^2}.$$
and by using the inequality \eqref{2.5} we obtain that
$$\real\left(1+z\frac{g''_{\nu}(z;q)}{g'_{\nu}(z;q)}\right)\geq1-\sum_{n\geq1}\frac{2r^2}{\alpha_{\nu,n}^2(q)-r^2},$$
where $|z|=r.$ Thus, for $r\in(0,\alpha_{\nu,1}(q))$ we get
$$\inf_{z\in\mathbb{D}_r}\left\{\real\left(1+\frac{zg_{\nu}''(z;q)}{g_{\nu}'(z;q)}\right)\right\}=
1-\sum_{n\geq1}\frac{2r^2}{\alpha_{\nu,n}^2(q)-r^2}=1+\frac{rg_{\nu}''(r;q)}{g_{\nu}'(r;q)}.$$
The function
$v_{\nu}(\cdot;q):(0,\alpha_{\nu,1}(q))\to\mathbb{R},$ defined by
$v_{\nu}(r;q)=1+{rg_{\nu}''(r;q)}/{g_{\nu}'(r;q)},$ is strictly decreasing
and $\lim_{r\searrow0}v_{\nu}(r;q)=1,$ $\lim_{r\nearrow\alpha_{\nu,1}(q)}v_{\nu}(r;q)=-\infty.$
Consequently for $z\in\mathbb{D}_{r_2}$ we have that
$$\real\left(1+\frac{zg_{\nu}''(z;q)}{g_{\nu}'(z;q)}\right)>\alpha$$
if and only if $r_2$ is the unique root of $$1+\frac{rg_{\nu}''(r;q)}{g_{\nu}'(r;q)}=\alpha$$ situated in $(0,\alpha_{\nu,1}(q)).$
Finally, the inequality $\alpha_{\nu,1}(q)<j_{\nu,1}(q)$ follows from Lemma \ref{zerodini}.

{\bf c)} In view of Lemma \ref{dini} we have that
$$1+z\frac{h''_{\nu}(z;q)}{h'_{\nu}(z;q)}=1-\sum_{n\geq1}\frac{2z}{\beta_{\nu,n}^2(q)-z}.$$
and by using the inequality \eqref{2.5} we obtain that
$$\real\left(1+z\frac{h''_{\nu}(z;q)}{h'_{\nu}(z;q)}\right)\geq1-\sum_{n\geq1}\frac{2r}{\beta_{\nu,n}^2(q)-r},$$
where $|z|=r.$ Thus, for $r\in(0,\alpha_{\nu,1}^2(q))$ we get
$$\inf_{z\in\mathbb{D}_r}\left\{\real\left(1+\frac{zh_{\nu}''(z;q)}{h_{\nu}'(z;q)}\right)\right\}=
1-\sum_{n\geq1}\frac{2r}{\beta_{\nu,n}^2(q)-r}=1+\frac{rh_{\nu}''(r;q)}{h_{\nu}'(r;q)}.$$
The function
$w_{\nu}(\cdot;q):(0,\beta_{\nu,1}^2(q))\to\mathbb{R},$ defined by
$w_{\nu}(r;q)=1+{rh_{\nu}''(r;q)}/{h_{\nu}'(r;q)},$ is strictly decreasing
and $\lim_{r\searrow0}w_{\nu}(r;q)=1,$ $\lim_{r\nearrow\beta_{\nu,1}^2(q)}w_{\nu}(r;q)=-\infty.$
Consequently for $z\in\mathbb{D}_{r_3}$ we have that
$$\real\left(1+\frac{zh_{\nu}''(z;q)}{h_{\nu}'(z;q)}\right)>\alpha$$
if and only if  $r_3$ is the unique root of $1+{rh_{\nu}''(r;q)}/{h_{\nu}'(r;q)}=\alpha$ situated in $(0,\beta_{\nu,1}^2(q)).$
Finally, the inequality $\beta_{\nu,1}(q)<j_{\nu,1}(q)$ follows from Lemma \ref{zerodini}, and this completes the proof of this theorem.
\end{proof}

\begin{proof}[\bf Proof of Theorem \ref{theo3}]
Observe that
$$h_{\nu}(z;q)=z\prod_{n\geq1}\left(1-\frac{z}{j_{\nu,n}^2(q)}\right).$$
Since $\nu\mapsto j_{\nu,n}(q)$ is increasing for each $n\in\mathbb{N},$ $q\in(0,1)$ on $(-1,\infty),$ see \cite[Theorem 3]{muldoon}, it follows that
$j_{\nu,1}(q)>1$ if and only if $\nu>\nu^{*}(q),$ where $\nu^{*}(q)$ is the unique root of $j_{\nu,1}(q)=1,$ and consequently $j_{\nu,n}(q)>1$ for all $n\in\mathbb{N}$ and $\nu>\nu^{*}(q).$ On the other hand, from the above infinite representation we have
$$\frac{zh_{\nu}'(z;q)}{h_{\nu}(z;q)}=1-\sum_{n\geq 1}\frac{z}{j_{\nu}^2(q)-z},$$
which implies that
$$\frac{h_{\nu}'(1;q)}{h_{\nu}(1;q)}=1-\sum_{n\geq 1}\frac{1}{j_{\nu}^2(q)-1},$$
and this greater or equal than zero if and only if $\nu\geq\nu_0(q),$ where $\nu_0(q)$ is the unique root of the equation
$h_{\nu}'(1;q)=0.$ Here we used that for all $\nu>-1$ we have
$$\frac{d}{d\nu}\left(\frac{f_{\nu}'(1;q)}{f_{\nu}(1;q)}\right)=\sum_{n\geq1}\frac{2j_{\nu,n}(q){ dj_{\nu,n}(q)}/{d\nu}}{(j_{\nu,n}^2(q)-1)^2}\geq0,$$
since the function $\nu\mapsto j_{\nu,n}(q)$ is increasing on $(-1,\infty)$ for all fixed $n\in\mathbb{N}.$ Thus, applying Lemma \ref{lemshah}, the conclusion of this theorem follows immediately.
\end{proof}

\begin{figure}[!ht]
\centering
\includegraphics[width=11.5cm]{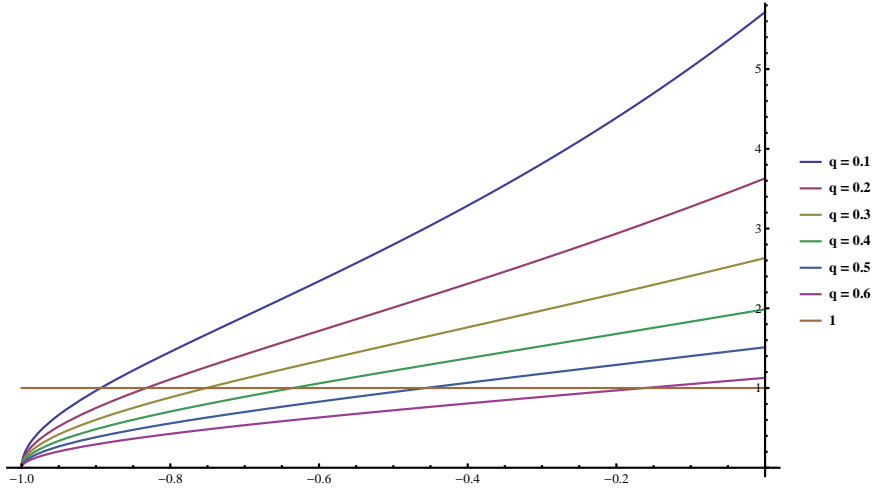}
\caption{The graph of the function $\nu\mapsto j_{\nu,1}(q)$ for $q\in\{0.1,\dots,0.6\}$ and of the horizontal line $y=1$ on $(-1,0).$}
\label{fig1}
\end{figure}

\section{Further results and concluding remarks}
\setcounter{equation}{0}

In this section we present some consequence of the Hadamard factorizations related to the Rayleigh sum of the $q$-Bessel functions under discussion and formulate some open problems.
\begin{enumerate}
\item[\bf A.] By using the infinite series and product representations of $h_{\nu}^{(2)}(\cdot;q),$ that is,
$$z\prod_{n\geq 1}\left(1-\frac{z}{j_{\nu,n}^2(q)}\right)=\sum_{n\geq 0}\frac{(-1)^nz^{n+1}}{2^{2n}(q;q)_n\left(q^{\nu+1};q\right)_n}q^{n(\nu+n)},$$
and comparing the coefficients of $z^2$ on the both sides of the above equation, it follows that
$$\sum_{n\geq1}\frac{1}{j_{\nu,n}^2(q)}=\frac{q^{\nu+1}}{4(q-1)(q^{\nu+1}-1)}.$$
This is actually the $q$-extension of the well-known first Rayleigh sum for Bessel functions of the first kind. Multiplying by $(1-q)^2$ both sides of the above equation and tending with $q$ to $1^{-}$ we obtain the Rayleigh sum in the question, that is,
$$\sum_{n\geq1}\frac{1}{j_{\nu,n}^2}=\frac{1}{4(\nu+1)},$$
where $j_{\nu,n}$ stands for the $n$th positive zero of the classical Bessel function $J_{\nu}.$ Now, taking into account that $j_{\nu,n}^2(q)-1<j_{\nu,n}^2(q)$ for all $\nu>-1,$ $q\in(0,1)$ and $n\in\mathbb{N},$ it follows that
$$\frac{h_{\nu}'(1;q)}{h_{\nu}(1;q)}=1-\sum_{n\geq 1}\frac{1}{j_{\nu}^2(q)-1}>1-\sum_{n\geq 1}\frac{1}{j_{\nu}^2(q)}=\frac{4q^{\nu+2}-5q^{\nu+1}-4q+4}{4(q-1)(q^{\nu+1}-1)}=\tau_{\nu}(q).$$
The figures \ref{fig1} and \ref{fig2} of $\nu\mapsto j_{\nu,1}(q)$ and $\nu\mapsto \tau_{\nu}(q)$ for some fixed values of $q$ suggest that in Theorem \ref{theo3} we have that $\max\{\nu^{*}(q),\nu_0(q)\}=\nu_0(q),$ exactly as in the case of the classical Bessel functions of the first kind. However, we were unable to prove that indeed $\max\{\nu^{*}(q),\nu_0(q)\}=\nu_0(q).$

\item[\bf B.] Here we present the $q$-analogue of another lower order Rayleigh sums of the Bessel function $J_{\nu}$. Note that the Rayleigh sums are sums of the form \[\sigma_\nu(2m)=\sum_{n\geq 1}\frac{1}{j_{\nu,n}^{2m}},\] and it is known that
\begin{equation}
\sigma_\nu(4)=\frac{1}{16(\nu+1)^2(\nu+2)}.\label{Rayleighsigma}
\end{equation}
See \cite{Wat} for more information on $\sigma_\nu(2m)$ in general. Let
\[\sigma_\nu^{(2)}(2m;q)=\sum_{n\geq 1}\frac{1}{j_{\nu,n}^{2m}(q)},\quad\mbox{and}\quad\sigma_\nu^{(3)}(2m;q)=\sum_{n\geq1}\frac{1}{l_{\nu,n}^{2m}(q)}.\]
According to the limit relations $J_\nu^{(2)}((1-q)z;q)\to J_\nu(z)$ and $J_\nu^{(3)}((1-q)z;q)\to J_\nu(2z)$, clearly we have that
$$\lim_{q\nearrow1}(1-q)^{2m}\sigma_\nu^{(2)}(2m;q)=\sigma_\nu(2m),\ \ \ \mbox{and}\ \ \ \lim_{q\nearrow1}\frac{(1-q)^{2m}}{2^{2m}}\sigma_\nu^{(3)}(2m;q)=\sigma_\nu(2m).$$
Now, we are going to determine the low-order $q$-Rayleigh sums $\sigma_\nu^{(s)}(2;q)$ and $\sigma_\nu^{(s)}(4;q)$ for $s\in\{2,3\}$. Earlier we saw in the previous comment that
\[\sigma_\nu^{(2)}(2;q)=\frac{q^{1+\nu}}{4(1-q)(1-q^{1+\nu})}.\]
In order to determine $\sigma_\nu^{(2)}(4;q)$ we apply
\[h_\nu^{(2)}(z;q)h_\nu^{(2)}(-z;q)=-z^2\prod_{n\ge1}\left(1-\frac{z^2}{j_{\nu,n}^4(q)}\right).\]
Comparing the coefficients of $z^4$ on both sides we have
\[\sigma_\nu^{(2)}(4;q)=\frac{q^{2\nu+2}}{16 (1-q)^2 \left(1-q^{\nu+1}\right)^2}-\frac{q^{2\nu+4}}{8 (1-q)(1-q^2)(1-q^{nu+1})(1-q^{\nu+2})}.\]
One can verify that
\[(1-q)^4\sigma_\nu^{(2)}(4;q)\to\frac{1}{16(\nu+1)^2(\nu+2)},\]
which agrees with \eqref{Rayleighsigma}.
Similarly, for the roots of the Hahn-Exton $q$-Bessel function
$$\sigma_\nu^{(3)}(2;q)=\frac{q}{(1-q)(1-q^{1+\nu})},\ \ \sigma_\nu^{(3)}(4;q)=\frac{q^2}{(1-q)^2 \left(1-q^{\nu+1}\right)^2}-\frac{2 q^3}{(1-q)(1-q^2)(1-q^{\nu+1})(1-q^{\nu+2})}.$$

Finally, we note that the classical Rayleigh inequalities \cite[p. 502]{Wat}
\[\left(\sigma_\nu(2m)\right)^{-{1}/{m}}<j_{\nu,1}^2<\frac{\sigma_\nu(2m)}{\sigma_\nu(2m+2)}\]
can easily be transferred to
\[\left(\sigma_\nu^{(2)}(2m;q)\right)^{-{1}/{m}}<j_{\nu,1}^2(q)<\frac{\sigma_\nu^{(2)}(2m;q)}{\sigma_\nu^{(2)}(2m+2;q)},\]
and
\[\left(\sigma_\nu^{(3)}(2m;q)\right)^{-{1}/{m}}<l_{\nu,1}^2(q)<\frac{\sigma_\nu^{(3)}(2m;q)}{\sigma_\nu^{(3)}(2m+2;q)}.\]
These relations come from the facts that the zeros are real and are ordered such that $j_{\nu,n}(q)<j_{\nu,n+1}(q)$ and $l_{\nu,n}(q)<l_{\nu,n+1}(q)$ for all $n\in\mathbb{N}.$

\item[\bf C.] We note that many other known results on Bessel functions of the first kind, like complete monotonicity properties and inequalities can be extended to Jackson and Hahn-Exton $q$-Bessel functions by using the infinite product representations presented in this paper. Moreover, it would be also interesting to see the monotonicity of the zeros of $z\mapsto z\cdot dJ_{\nu}^{(s)}(z;q)/dz+cJ_{\nu}^{(s)}(z;q),$ $s\in\{2,3\}$ with respect to the order $\nu$ and to extend Theorem \ref{theo3} to the functions $f_{\nu}^{(s)}(\cdot;q)$ and $g_{\nu}^{(s)}(\cdot;q),$ $s\in\{2,3\},$ as well as to consider the convexity of these functions together with $h_{\nu}^{(s)}(\cdot;q)$ in the open unit disk $\mathbb{D}.$ Moreover, we would like to see also the counterpart of Theorem \ref{theo3} for $h_{\nu}^{(3)}(\cdot;q).$ These problems may be of interest for further research.
\end{enumerate}

\begin{figure}[!ht]
\centering
\includegraphics[width=11.5cm]{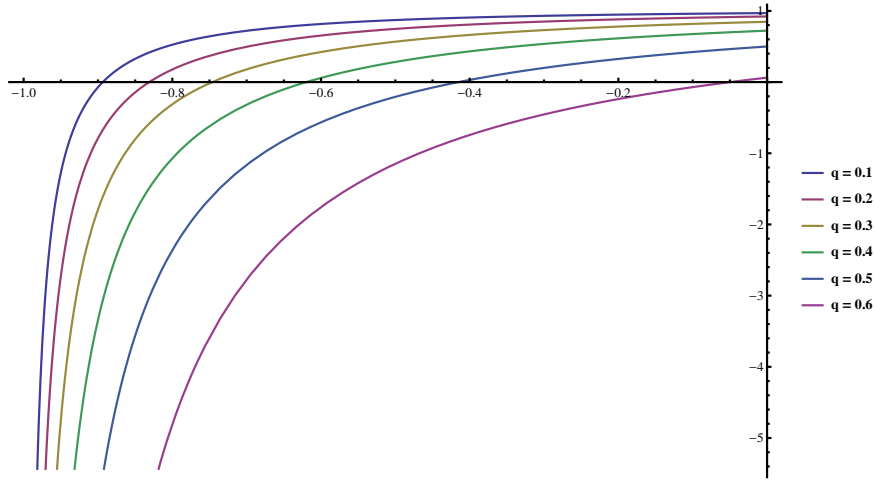}
\caption{The graph of the function $\nu\mapsto \tau_{\nu}(q)$ for $q\in\{0.1,\dots,0.6\}$ on $(-1,0).$}
\label{fig2}
\end{figure}

\end{document}